\newtheorem{theorem}{Theorem}[section]
\newtheorem{lemma}[theorem]{Lemma}
\newtheorem{corollary}[theorem]{Corollary}
\theoremstyle{definition}
\theoremstyle{definitions}
\newtheorem{definition}[theorem]{Definition}
\newtheorem{example}[theorem]{Example}
\theoremstyle{notations}
\theoremstyle{remarks}
\newcommand{\st}{\stackrel}
\journal{ }
\begin{document}

\begin{frontmatter}

%% Title, authors and addresses

%% use the tnoteref command within \title for footnotes;
%% use the tnotetext command for the associated footnote;
%% use the fnref command within \author or \address for footnotes;
%% use the fntext command for the associated footnote;
%% use the corref command within \author for corresponding author footnotes;
%% use the cortext command for the associated footnote;
%% use the ead command for the email address,
%% and the form \ead[url] for the home page:
%%
%% \title{Title\tnoteref{label1}}
%% \tnotetext[label1]{}
%% \author{Name\corref{cor1}\fnref{label2}}
%% \ead{email address}
%% \ead[url]{home page}
%% \fntext[label2]{}
%% \cortext[cor1]{}
%% \address{Address\fnref{label3}}
%% \fntext[label3]{}

\title{On Hawaiian Groups of Some Topological Spaces}

%% use optional labels to link authors explicitly to addresses:
%% \author[label1,label2]{<author name>}
%% \address[label1]{<address>}
%% \address[label2]{<address>}

\author[]{Ameneh~Babaee}
\ead{am.babaee88@gmail.com}
\author[]{Behrooz~Mashayekhy\corref{cor1}}
\ead{bmashf@um.ac.ir}
\author[]{Hanieh~Mirebrahimi}
\ead{h$_{-}$mirebrahimi@um.ac.ir}
\address{Department of Pure Mathematics, Center of Excellence in Analysis on Algebraic Structures, Ferdowsi University of
Mashhad,\\
P.O.Box 1159-91775, Mashhad, Iran.}
\cortext[cor1]{Corresponding author}
\begin{abstract}
The paper is devoted to study the structure of Hawaiian groups of some topological spaces. We present some behaviors of
Hawaiian groups with respect to product spaces,
weak join spaces, cone spaces, covering spaces and locally trivial bundles.
In particular, we determine the structure of the $n$-dimensional Hawaiian group of the $m$-dimensional Hawaiian earring
space, for all $1\leq m\leq n$.
\end{abstract}

\begin{keyword}
Hawaiian group\sep Hawaiian earring\sep Weak join.
%% keywords here, in the form: keyword \sep keyword
\MSC[2010]{55Q05, 55Q20, 54F15, 54D05.}
%% MSC codes here, in the form: \MSC code \sep code
%% or \MSC[2008] code \sep code (2000 is the default)

\end{keyword}

\end{frontmatter}

%%
%% Start line numbering here if you want
%%
% \linenumbers

%% main text
%\\\\\\\\\\\\\\\\\\\\\\\\\\\\\\\\\\\\\\\\\\\\\\\\\\\\\\\\\\\\\\\\\\\\\\\\\\\\\\\\\\\\\\\\\\\\\\\\\\\\\\\\\\\\\\\\\\\\\\\\\\\\\\\\\\\\\\\\\
%=========================================================================================================================================
%/////////////////////////////////////////////////////////////////////////////////////////////////////////////////////////////////////////
\section{Introduction and Motivation}
In 2000, K. Eda and K. Kawamura \cite{eda} defined the $n$-dimensional Hawaiian earring, $n=1,2,\ldots$, as the following
subspace of the $(n+1)$-dimensional Euclidean space ${\Bbb{R}}^{(n+1)}$
\begin{center}
$\Bbb{H}^n = \{(r_0,r_1,...,r_n)\in {\Bbb{R}}^{(n+1)}\ |\ (r_0-1/k)^2+{\sum}_{i=1}^n r_i^2 = (1/k)^2, k\in {\Bbb{N}}\}.$
\end{center}
Here $\theta = (0,0,...,0)$ is regarded as the base point of $\Bbb{H}^n$, and $S_k^n$ shows the $n$-sphere in $\Bbb{H}^n$
with radius 1/k.

%--------------------------------------------------------------------------------------------------------------------%

In 2006, U.H. Karimov and D. Repov\u{s} \cite{karh} defined a new notion, the  $n$-Hawaiian group of
a pointed space $(X,x_0)$ to be the set of all pointed  homotopy classes $[f]$, where
$f:({\Bbb{H}^n},\theta)\rightarrow(X,x_0)$ is continuous, with a group operation which comes naturally from the operation
of $n$th homotopy group denoted by ${\mathcal{H}}_n(X,x_0)$
which we call it the $n$-Hawaiian group of $(X,x_0)$. This group is homotopy invariant in the category of all pointed topological
spaces. One can see that ${\mathcal{H}}_n:hTop_{\ast} \rightarrow Groups $ is a covariant functor from the pointed homotopy category,
$hTop_{\ast}$, to the category of all groups, $Groups$, for $n\geq 1$. If
$\beta:(X,x_0)\rightarrow (Y,y_0)$ is a continuous map between pointed spaces, then
$\mathcal{H}_n(\beta)={\beta}_{\ast}:{\mathcal{H}}_n (X,x_0)\rightarrow {\mathcal{H}}_n (Y,y_0)$ defined by
${\beta}_{\ast}([f])= [\beta \circ f]$ is a homomorphism.

%------------------------------------------------------------
They also mentioned \cite{karh} some advantages of Hawaiian group functor rather than other famous functors such as  homotopy,
homology and cohomology functors. There exists a contractible space $C(\Bbb{H}^1)$, the cone over $\Bbb{H}^1$, with nontrivial
$1$-Hawaiian group, but trivial homotopy, homology and cohomology groups. In fact in \cite{karh}, it is showed that
${\mathcal{H}}_1(C({\Bbb{H}}^1),\theta)$ is uncountable. Also, this functor can help us to get some local properties
of spaces. In fact, if $X$ has a countable system of neighborhood at $x_0$, then countability of the $n$-Hawaiian group ${\mathcal{H}}_n (X,x_0)$ implies $n$-locally simply connectedness of $X$ at $x_0$.
(see \cite[Theorem 2]{karh}).

There is a relation between the Hawaiian group and the homotopy groups of a pointed space $(X,x_0)$ as follows.
\begin{theorem} (\cite[Theorem 1]{karh}). If the space $X$ is $n$-locally simply connected at the point $x_0$ and satisfies the first
countability axiom, then
$$\hspace{3cm}{\varphi}:{\mathcal{H}}_n(X,x_0) \rightarrow  {\prod}_{i\in\Bbb{N}}^w{\pi}_n(X,x_0) \hspace{3cm}(I) $$
defined by $\varphi([f])=([f{\mid}_{S_1^n}], [f{\mid}_{S_2^n}],... )$ is an isomorphism, where
$\prod_{i\in\Bbb{N}}^w{\pi}_n(X,x_0)$ is the weak direct product of countable copies of ${\pi}_n(X,x_0)$.
\end{theorem}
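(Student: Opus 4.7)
The plan is to check that $\varphi$ is a well-defined group homomorphism and then prove surjectivity and injectivity, with the latter being the substantive step. Throughout, I would use first countability to fix a decreasing neighborhood basis $\{V_m\}$ at $x_0$, and refine it using $n$-local simple connectedness so that every based map $S^n\to V_{m+1}$ is null-homotopic in $V_m$.

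For well-definedness, since the spheres $S_k^n$ shrink to $\theta$, continuity of any based $f:\mathbb{H}^n\to X$ forces $f(S_k^n)\subseteq V_2$ for large $k$, so $[f|_{S_k^n}]=0$ in $\pi_n(X,x_0)$ eventually; thus $\varphi([f])$ really lands in the weak direct product. Homotopy classes are respected because a based homotopy of $f$ restricts to one on each $S_k^n$. The homomorphism property follows from the fact that the group operation on $\mathcal{H}_n$ is defined sphere-by-sphere using the same pinch map that defines the operation on $\pi_n$, so $\varphi$ is just the product of the restriction homomorphisms.

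For surjectivity, given $([\alpha_k])$ in the weak product with $[\alpha_k]=0$ for $k\geq N$, I would build $f:\mathbb{H}^n\to X$ by taking $f|_{S_k^n}=\alpha_k$ for $k<N$ and $f|_{S_k^n}\equiv x_0$ for $k\geq N$. Continuity at $\theta$ is immediate because $f$ is constant on the subspace $\{\theta\}\cup\bigcup_{k\geq N}S_k^n$, which contains arbitrarily small neighborhoods of $\theta$, while only finitely many based continuous $\alpha_k$'s contribute elsewhere. This yields $\varphi([f])=([\alpha_k])$.

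For injectivity, suppose $\varphi([f])=\varphi([g])$, so there exist based homotopies $h_k:f|_{S_k^n}\simeq g|_{S_k^n}$ in $X$ for every $k$. The obstacle is that a naive gluing of the $h_k$'s need not be continuous along $\{\theta\}\times I$, since their images need not shrink to $x_0$. To remedy this, choose indices $k_1<k_2<\cdots$ with $f(S_k^n)\cup g(S_k^n)\subseteq V_{m+1}$ for $k\geq k_m$; for $k_m\leq k<k_{m+1}$, replace $h_k$ by a new based homotopy whose image lies in $V_m$, which is possible because both $f|_{S_k^n}$ and $g|_{S_k^n}$ are based null-homotopic in $V_m$ and hence mutually homotopic there via concatenation of null-homotopies. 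The glued map $H:\mathbb{H}^n\times I\to X$ is then continuous: away from $\{\theta\}\times I$ this is automatic, and at $(\theta,t)$ it follows because $H(S_k^n\times I)\subseteq V_m$ whenever $k\geq k_m$, while any neighborhood of $x_0$ contains some $V_m$. Thus $[f]=[g]$. The main obstacle is precisely this controlled-homotopy construction, which is where both hypotheses are essential: first countability produces the basis, and $n$-local simple connectedness makes the basis null-homotopy-friendly.
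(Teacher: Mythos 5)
This statement is quoted from Karimov--Repov\v{s} \cite[Theorem 1]{karh}; the paper itself gives no proof of it, so there is nothing internal to compare against line by line. Your argument is correct, and it is built from exactly the ingredients this paper uses for its neighbouring results: your gluing criterion is Lemma~\ref{joinmap}(i)--(iii) (null-convergent sequences of maps and of homotopies), your surjectivity step is the embedding $\beta$ of Lemma~\ref{weakdir}, and your injectivity step --- replacing the given homotopies $h_k$ by controlled ones with images in a nested basis $\{V_m\}$ refined so that $\pi_n(V_{m+1},x_0)\to\pi_n(V_m,x_0)$ is zero --- is the same device used in the proofs of Theorems~\ref{semistrnog} and~\ref{main}, which is precisely where both hypotheses (first countability to get the basis, $n$-local simple connectedness to make it null-homotopy-friendly) enter. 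No gaps.
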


We use the following concepts frequently in the paper.
\begin{definition} A space $X$ is called:\\
$(i)$  $n$-locally simply connected at the point $x_0$ if for every neighborhood $U\subset X$ of $x_0$ there is a neighborhood $V\subset U$ of $x_0$ such that the homomorphism $i_*:\pi_n(V,x_0)\rightarrow \pi_n(U,x_0)$ induced by inclusion is zero.\\
$(ii)$  $n$-semilocally simply connected at the point $x_0$ if there is a neighborhood $U$ of $x_0$ such that the homomorphism $i_*:\pi_n(U,x_0)\rightarrow \pi_n(X,x_0)$ induced by inclusion is zero.\\
$(iii)$ locally strongly contractible at $x_0$ if for every neighborhood $U\subset X$ of $x_0$ there exists a neighborhood $V\subset U$ of $x_0$ such that the inclusion map $V\hookrightarrow U$ is null-homotopic to the point $x_0$ \\
$(iv)$ semilocally strongly contractible at $x_0$ if there exists a neighborhood $U$ of $x_0$ such that the inclusion
map $U\hookrightarrow X$ is null-homotopic to the point $x_0$ (see \cite{eda}).
\end{definition}
%---------------------------------------------------------------------------------------------------------------------

The paper is organized as follows. In Section 2, we establish some more properties of Hawaiian groups. First we show that $n$-Hawaiian groups are abelian for all $n\geq 2$. We also show that the map $(I)$ is an isomorphism for semilocally strongly contractible spaces.
Second, we compute the $n$-Hawaiian group of a weak join of a countable family of $(n-1)$-connected, locally strongly
contractible and first countable pointed spaces (see \cite{mm} for the definition of the weak join).
As a consequence, we can compute the $m$-Hawaiian group of an $n$-Hawaiian earring for all $1\leq m\leq n$.
Moreover, we show that all Hawaiian group functors preserve direct products.
Third, we concentrate on Hawaiian groups of the cone of spaces. As a main result, we show that the $n$-Hawaiian group of the cone of $(X,x_0)$,
$C(X)$, at the point $(x_0,t)$ except the vertex is the quotient group of ${\mathcal{H}}_n(X,x_0)$ by ${\prod}_{i\in\Bbb{N}}^w{\pi}_n(X,x_0)$.
Finally, we give an exact sequence of Hawaiian groups for a first countable locally trivial bundle which gives more
information about Hawaiian groups of covering spaces or ${\Bbb{R}}^n$-bundles.

%----------------------------------------------------------------------------------------------------------------------

 Karimov and  Repov\u{s} \cite{karo} generalized the Hawaiian earring to the infinite dimension, to be the weak join of  all
 finite dimensional Hawaiian earrings and denoted by $\Bbb{H}^{\infty}$.
Then they followed it by the infinite dimensional Hawaiian group similar to finite dimension to be the set of all pointed
 homotopy classes $[f]$, where
$f:({\Bbb{H}}^{\infty},\theta)\rightarrow(X,x_0)$ is continuous, endowed with group operation, which comes from the
finite dimensional Hawaiian groups denoted by ${\mathcal{H}}_{\infty}(X,x_0)$. One can verify that ${\mathcal{H}}_{\infty}:hTop_\ast \rightarrow Groups $ is a
covariant functor with induced homomorphism similar to finite dimension cases.
Also they construct a Peano continuum with trivial homotopy, homology (singular, \u{C}ech and Borel\_Moore), cohomology
(singular and \u{C}ech) and finite dimensional Hawaiian groups, which is not contractible and has nontrivial
 infinite dimensional Hawaiian group \cite{karo}.

%--------------------------------------------------------------------------------------------------------------------%

In Section 3, we study the structure of the infinite dimensional Hawaiian group. We extend most of properties of finite
dimensional Hawaiian groups obtained in Section 2 to the infinite case.

%///////////////////////////////////////////////////////////////////////////////////////////////////////////////////////
%====================================================================================================================
%\\\\\\\\\\\\\\\\\\\\\\\\\\\\\\\\\\\\\\\\\\\\\\\\\\\\\\\\\\\\\\\\\\\\\\\\\\\\\\\\\\\\\\\\\\\\\\\\\\\\\\\\\\\\\\\\\\\\\
\section{Finite Dimensional Hawaiian Groups}
We say that a pointed space $(X,x_0)$ has a local property if $X$ has the property at point $x_0$.
\begin{definition}
Let $\{f_i:(X,x_0)\rightarrow (Y,y_0)|i\in I\}$ be a family of continuous maps. We say that $\{f_i\}_{i\in I}$ is
null-convergent if for each open set $U$ containing $y_0$, we have $Im(f_i)\subseteq U$ for all $i\in I$ except a finite
number.
\end{definition}
The following lemma will be used in several results.
\begin{lemma}\label{joinmap}
With the previous notations and assumptions, let $(X,x_0)$ be a pointed topological space, then the following statements hold.\\
$(i)$ Let $\{f_k:(S_k^n,a)\rightarrow (X,x_0)\}$ be a  sequence of continuous maps, then
$f:(\Bbb{H}^n,\theta)\rightarrow (X,x_0)$ defined by $f{\mid}_{S_k^n}=f_k$ is continuous
if and only if $\{f_k\}$ is null-convergent.\\
$(ii)$ Let $\{f_k,f'_k:(S_k^n,a)\rightarrow (X,x_0)\}_{k\in\Bbb{N}}$ be two sequences of continuous maps with
$f_k\simeq f'_k\ rel\ \{a\}$ by null-convergent sequence of homotopies $\{H_k:S_k^n\times I\rightarrow X\}_{k\in\Bbb{N}}$
relative to the point $a$. Then $H:f\simeq f'\ rel\ \{\theta\}$, where  $H{\mid}_{S_k^n\times I}=H_k$, $f{\mid}_{S_k^n}=
f_k$ and $f'{\mid}_{S_k^n}=f'_k$.\\
$(iii)$ Let $f_1,\ f_2,\ ...,\ f_m$ and $f'_1,\ f'_2,\ ...,\ f'_m$ be two  finite sequences of continuous  maps with
$f_k,f'_k:(S_k^n,a)\rightarrow (X,x_0)$ and $f_k\simeq f'_k$ for $k=1,2,\ldots,m$ and let
 $g,g':({\tilde{\bigvee}}_{k\geq m+1}S_k^n,a)\rightarrow (X,x_0)$ be two continuous homotopic maps relative to
 $\{\theta\}$, where ${\tilde{\bigvee}}_{k\geq m+1}S_k^n$ is the weak join of the family of $n$-spheres $\{S_k^n|k\geq m\}$ . Then $f,f':(\Bbb{H}^n,\theta)\rightarrow (X,x_0)$ defined by $f{\mid}_{S_k^n}=f_k$,
  $f'{\mid}_{S_k^n}=f'_k$, for $k=1,2,\ldots,m$ and $f{\mid}_{{\tilde{\bigvee}}_{k\geq m+1}S_k^n}=g$,
  $f'{\mid}_{{\tilde{\bigvee}}_{k\geq m+1}S_k^n}=g'$ are continuous and homotopic relative to $\{\theta\}$.
\end{lemma}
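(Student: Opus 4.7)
The strategy for all three parts is the same: reduce every continuity question to the critical point(s) where infinitely many spheres accumulate, namely $\theta$ in $\mathbb{H}^n$ and $(\theta,t_0)$ in $\mathbb{H}^n\times I$. Away from these points, $\mathbb{H}^n$ locally agrees with a single sphere $S_k^n$, so the hypotheses on the $f_k$, $H_k$, $g$ and $H'$ give continuity for free, and well-definedness of the pasted maps is automatic because every map in sight sends $a$ (resp.\ $\{a\}\times I$) to $x_0$.

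For part (i), the only topological input is that any neighborhood of $\theta$ in $\mathbb{H}^n\subset\mathbb{R}^{n+1}$ contains a Euclidean ball about $\theta$ and therefore contains all but finitely many spheres $S_k^n$ (whose radii $1/k$ tend to $0$). The ``only if'' direction is this observation applied to $f^{-1}(U)$. For the ``if'' direction, given a neighborhood $U$ of $x_0$, pick $N$ with $f_k(S_k^n)\subseteq U$ for $k\geq N$, and for each $k<N$ use continuity of $f_k$ at $a$ to choose an open neighborhood $V_k$ of $a$ in $S_k^n$ with $f_k(V_k)\subseteq U$. The set
\[
W\;=\;\bigcup_{k<N}V_k\;\cup\;\bigcup_{k\geq N}S_k^n
\]
has complement $\bigcup_{k<N}(S_k^n\setminus V_k)$, a finite union of compact subsets of $\mathbb{R}^{n+1}$, so $W$ is open in $\mathbb{H}^n$, contains $\theta$, and satisfies $f(W)\subseteq U$.

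Part (ii) repeats this argument in $\mathbb{H}^n\times I$. Continuity of $H$ at any point of $(\mathbb{H}^n\setminus\{\theta\})\times I$ is immediate from continuity of the relevant $H_k$. At $(\theta,t_0)$, given a neighborhood $U$ of $x_0$, null-convergence of $\{H_k\}$ gives $N$ with $H_k(S_k^n\times I)\subseteq U$ for $k\geq N$, while continuity of each $H_k$ ($k<N$) at $(a,t_0)$ produces product neighborhoods $V_k\times J_k$ mapped into $U$; taking $J=\bigcap_{k<N}J_k$ and $W$ constructed as in part (i) yields a neighborhood $W\times J$ of $(\theta,t_0)$ with $H(W\times J)\subseteq U$.

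Part (iii) then combines the two patterns. Continuity of $g$ and $g'$ on the weak join forces (by the same Euclidean-ball argument as in (i)) the tail restrictions $\{g|_{S_k^n}\}_{k\geq m+1}$ and $\{g'|_{S_k^n}\}_{k\geq m+1}$ to be null-convergent, so part (i) shows that $f,f'$ are continuous on $\mathbb{H}^n$. Defining $H$ to be $H_k$ on $S_k^n\times I$ for $k\leq m$ and the given homotopy from $g$ to $g'$ on the weak-join part, continuity at $(\theta,t_0)$ is verified exactly as in part (ii), except that the role of null-convergence of the tail is now played by continuity of the weak-join homotopy at $(\theta,t_0)$. The one technical obstacle throughout is precisely this patching at the singular point $\theta$, which is what the null-convergence hypothesis is designed to manage.
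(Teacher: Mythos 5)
Your proof is correct and follows essentially the same route as the paper's: reduce continuity to the accumulation point $\theta$ (resp.\ $(\theta,t_0)$), use the fact that every neighborhood of $\theta$ contains all but finitely many spheres, and handle the finitely many exceptional spheres by continuity of the individual maps at the base point. The only difference is that you spell out the explicit open sets $W$ and $W\times J$ where the paper simply appeals to ``the topology of the weak join,'' so your write-up is a more detailed version of the same argument.
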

\begin{proof}
$(i)$ Let  $f:(\Bbb{H}^n,\theta)\rightarrow (X,x_0)$ be continuous, then for each neighborhood $U$ of $x_0$, $f^{-1}(U)$
contains all the $n$-sphere's of the Hawaiian earring except a finite number, that is, there exists $K\in \Bbb{N}$ such
that $Im(f{\mid}_{S_k^n})\subseteq U$ for all $k\geq K$. Hence if $k\geq K$, then $Im(f_k)\subseteq U$ and so $\{f_k\}$ is
a null-convergent sequence.

Now, let $\{f_k\}$ be a null-convergent sequence, then if $U$ is an open set containing $x_0$, there exists $K\in \Bbb{N}$
such that for all $k\geq K$, $Im(f_k)\subseteq U$. Since $f{\mid}_{S_k^n}=f_k$ , for $k\geq K$ we have
$Im(f{\mid}_{S_k^n})\subseteq U$ or equivalently $S_k^n\subseteq f^{-1}(U)$. Using the topology of weak join on the
Hawaiian earring and continuity of the $f_k$'s, $f$ is continuous.\\
$(ii)$ Similar to the proof of part $(i)$ one can prove that $H$ is continuous and verify that $H:f\simeq f'\ rel\ \{\theta\}$.\\
$(iii)$ Since $g$ is continuous, then by $(i)$ $\{g{\mid}_{S_{m+1}^n},\ g{\mid}_{S_{m+2}^n},\ ...\}$ is a null-convergent
sequence and so is
$\{f_1,\ ,f_2,\ ...,\ f_m,\ g{\mid}_{S_{m+1}^n},\ g{\mid}_{S_{m+2}^n},\ ...\}$. Using again $(i)$ implies that $f$ is
continuous.
Now, let $F_k:f_k\simeq f'_k$, $G:g\simeq g'$ and define $H{\mid}_{S_k^n\times I}=F_k$, for $k=1,\ldots,m$ and
$H{\mid}_{({\widetilde{\bigvee}}_{k\geq m+1}S_k^n)\times I}=G$.
Using the topology of the Hawaiian earring, $H$ is continuous and hence $H:f\simeq f'\ rel\ \{\theta\}$.
\end{proof}

%--------------------------------------------------------------------------------------------------------------------%

\begin{theorem}\label{abel}
Let $(X,x_0)$ be a pointed space, then ${\mathcal{H}}_n(X,x_0)$ is an abelian group, for all $n\geq 2$.
\end{theorem}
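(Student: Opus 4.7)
The plan is to reduce commutativity in $\mathcal{H}_n$ to commutativity in each $\pi_n(X,x_0)$ on the individual spheres $S_k^n$, and then glue the resulting Eckmann--Hilton homotopies together via Lemma \ref{joinmap}. Let $[f],[g]\in \mathcal{H}_n(X,x_0)$ and write $f_k=f\mid_{S_k^n}$, $g_k=g\mid_{S_k^n}$, so that each pair $f_k,g_k$ represents elements of $\pi_n(X,x_0)$. By the definition of the group operation on $\mathcal{H}_n$, the product $[f]\cdot [g]$ is represented by the map whose restriction to $S_k^n$ is the $\pi_n$-product $f_k\cdot g_k$, and similarly for $[g]\cdot[f]$.

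Since $n\geq 2$, the classical Eckmann--Hilton argument produces, for each $k\in\mathbb{N}$, a homotopy $H_k:S_k^n\times I\to X$ relative to the basepoint $a$ with $H_k:f_k\cdot g_k\simeq g_k\cdot f_k$. The crucial point I would highlight is that this standard argument is purely a rearrangement of two disjointly supported $n$-cubes inside the domain $I^n$; consequently the image of $H_k$ is contained in $\mathrm{Im}(f_k)\cup\mathrm{Im}(g_k)\cup\{x_0\}$. This bookkeeping is the only nonformal step and is the one I expect to be the main obstacle, since it has to be traced explicitly from the construction rather than quoted from $\pi_n$ alone.

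With that image bound in hand, null-convergence of $\{H_k\}$ follows from null-convergence of $\{f_k\}$ and $\{g_k\}$, which in turn holds by Lemma \ref{joinmap}(i) applied to the continuous maps $f$ and $g$. Indeed, for any neighborhood $U$ of $x_0$, all but finitely many $\mathrm{Im}(f_k)$ and $\mathrm{Im}(g_k)$ lie in $U$, hence so do all but finitely many $\mathrm{Im}(H_k)$.

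Now Lemma \ref{joinmap}(ii) applies to the null-convergent family of homotopies $\{H_k\}$, producing a well-defined continuous homotopy $H:\mathbb{H}^n\times I\to X$ with $H\mid_{S_k^n\times I}=H_k$ and $H:f\cdot g\simeq g\cdot f$ relative to $\theta$. Therefore $[f]\cdot[g]=[g]\cdot[f]$ in $\mathcal{H}_n(X,x_0)$, completing the proof.
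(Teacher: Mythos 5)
Your proposal is correct and follows essentially the same route as the paper: restrict to each $S_k^n$, use the standard $n\geq 2$ commutativity homotopy with the key image bound $\mathrm{Im}(H_k)\subseteq \mathrm{Im}(f_k)\cup \mathrm{Im}(g_k)$ (the paper cites Hatcher for this), deduce null-convergence of $\{H_k\}$ from that of $\{f_k\}$ and $\{g_k\}$ via Lemma \ref{joinmap}(i), and glue with Lemma \ref{joinmap}(ii). No substantive differences.
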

\begin{proof}
If $[f],[g]\in {\mathcal{H}}_n(X,x_0)$, then  $[f]\ast [g]=[f\ast g]$ and  $(f\ast g){\mid}_{S_k^n}=f{\mid}_{S_k^n}\ast g{\mid}_{S_k^n}$. Since
$n\geq 2$, we have a homotopy map $H_k:f{\mid}_{S_k^n}\ast g{\mid}_{S_k^n}\simeq g{\mid}_{S_k^n}\ast f{\mid}_{S_k^n}\ rel\ \{\theta\}$ with $Im(H_k)=Im(f{\mid}_{S_k^n})\cup Im(g{\mid}_{S_k^n})$ (see \cite[page 340]{hatch} for further details). Since $f$, $g$ are continuous, by Lemma \ref{joinmap} $(i)$, for each open set $U$ containing $x_0$, there exist $K_f,K_g\in\Bbb{N}$ such that for $k\geq K_f$, $Im(f{\mid}_{S_k^n})\subseteq U$ and for $k\geq K_g$, $Im(g{\mid}_{S_k^n})\subseteq U$. Thus for $k\geq max\{K_f,K_g\}$, $Im(f{\mid}_{S_k^n})\cup Im(g{\mid}_{S_k^n})\subseteq U$ and so $Im(H_k)\subseteq U$ which  implies  that $\{H_k\}_{k\in\Bbb{N}}$ is null-convergent. Use Lemma \ref{joinmap} $(ii)$ to prove that $f\ast g\simeq g\ast f\ rel \ \{\theta\}$.
\end{proof}

%------------------------------------------------------------------------------------------------------------------%

\begin{lemma}\label{weakdir}
If $(X,x_0)$ is a pointed space, then ${\prod}_{i\in \Bbb{N}}^w{\pi}_n(X,x_0)$ can be embedded in ${\mathcal{H}}_n(X,x_0)$.
\end{lemma}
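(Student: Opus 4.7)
The plan is to construct an explicit embedding
\[
\psi:{\prod}_{i\in\Bbb{N}}^w{\pi}_n(X,x_0)\rightarrow{\mathcal{H}}_n(X,x_0)
\]
by sending a weak-product tuple to the homotopy class of the map on $\Bbb{H}^n$ that realizes the $k$-th coordinate on the $k$-th sphere $S_k^n$. More precisely, given $([\al_1],[\al_2],\ldots)$ with $[\al_k]=0$ for all $k$ outside some finite set $F$, I would choose the constant map $c_{x_0}$ as representative of $[\al_k]$ for $k\notin F$ and any chosen representative $\al_k$ for $k\in F$. The resulting sequence $\{\al_k:S_k^n\rightarrow X\}_{k\in\Bbb{N}}$ is null-convergent (indeed, eventually equal to $c_{x_0}$), so by Lemma \ref{joinmap}$(i)$ the map $f:(\Bbb{H}^n,\theta)\rightarrow (X,x_0)$ with $f|_{S_k^n}=\al_k$ is continuous, and I would set $\psi([\al_1],[\al_2],\ldots):=[f]$.

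For well-definedness, suppose another system of representatives $\al'_k$ has been chosen for the same classes. Then outside a common finite set both $\al_k$ and $\al'_k$ are the constant map at $x_0$, so the based homotopy $H_k:\al_k\simeq\al'_k$ may be taken to be constant at $x_0$ for all but finitely many $k$. The family $\{H_k\}$ is then null-convergent, and Lemma \ref{joinmap}$(ii)$ produces a based homotopy $f\simeq f'$ rel $\{\theta\}$. The homomorphism property is a direct comparison: the operation on $\mathcal{H}_n(X,x_0)$ is defined sphere-wise by applying the $n$-th homotopy group multiplication on each $S_k^n$ independently, and this matches exactly the coordinate-wise multiplication in the weak direct product of countable copies of $\pi_n(X,x_0)$.

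Injectivity is the easiest step: if $\psi([\al_1],[\al_2],\ldots)=[c_{x_0}]$ in $\mathcal{H}_n(X,x_0)$, a witnessing homotopy $H:f\simeq c_{x_0}$ rel $\{\theta\}$ restricts on each $S_k^n\times I$ to a based null-homotopy of $\al_k$, forcing $[\al_k]=0$ in $\pi_n(X,x_0)$ for every $k$. I expect no serious obstacle in this proof; the only point requiring care is the well-definedness, and it goes through because the ``almost all trivial'' property of weak-product representatives supplies exactly the null-convergence hypothesis of Lemma \ref{joinmap}$(ii)$ for free, without any further uniformity or convergence argument on $X$.
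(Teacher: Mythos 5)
Your proposal is correct and follows essentially the same route as the paper: define the map by realizing each coordinate on the corresponding sphere $S_k^n$ (constant maps on the cofinitely many trivial coordinates), invoke Lemma \ref{joinmap}$(i)$ for continuity via eventual constancy, and obtain injectivity by restricting a null-homotopy of $f$ to each $S_k^n\times I$. The only difference is that you spell out well-definedness under change of representatives via Lemma \ref{joinmap}$(ii)$, a point the paper leaves implicit; this is a harmless and welcome addition rather than a divergence.
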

\begin{proof}
Define $\beta:{\prod}_{i\in \Bbb{N}}^w{\pi}_n(X,x_0)\longrightarrow {\mathcal{H}}_n(X,x_0)$ by  $$\beta([f_1],[f_2],...,[f_m],e,e,...)=[f],$$ in which $f{\mid}_{S_k^n}=f_k$ for $k\leq m$, and  $f{\mid}_{S_k^n}=c_{x_0}$ for $k>m$, where $c_{x_0}$ is the constant loop at $x_0$.
Since the sequence $\{f{\mid}_{S_k^n}\}_{k\in\Bbb{N}}$ is constant except a finite number, it is null-convergent  and  Lemma \ref{joinmap} implies that $f$ is continuous. Let $\beta([f_1],[f_2],...,[f_m],e,e,...)=e$, then the corresponding map $f$ is null-homotopic relative to $\{\theta\}$ and so $f{\mid}_{S_k^n}=f_k\simeq c_{x_0}\ \ rel\{\theta\} $.
Hence $\beta$ is injective. The operation of  ${\mathcal{H}}_n(X,x_0)$ implies that $\beta$ is a homomorphism and hence it is a monomorphism.
\end{proof}

%------------------------------------------------------------------------------------------------------------------------------------------------

\begin{theorem}\label{semistrnog}
Let $(X,x_0)$ be a semilocally strongly contractible pointed space, then
$${\mathcal{H}}_n(X,x_0)\cong{\prod}_{i\in\Bbb{N}}^w{\pi}_n(X,x_0).$$
\end{theorem}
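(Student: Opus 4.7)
The plan is to prove the injective homomorphism $\beta:\prod_{i\in\Bbb{N}}^w\pi_n(X,x_0)\to\mathcal{H}_n(X,x_0)$ from Lemma \ref{weakdir} is surjective, which suffices for the isomorphism. To this end, fix a class $[f]\in\mathcal{H}_n(X,x_0)$ with representative $f:(\Bbb{H}^n,\theta)\to(X,x_0)$. Using the semilocal strong contractibility hypothesis at $x_0$, pick a neighborhood $U$ of $x_0$ together with a null-homotopy $H:U\times I\to X$ with $H(u,0)=u$, $H(u,1)=x_0$, and (interpreted in the pointed sense) $H(x_0,t)=x_0$ for all $t\in I$. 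By continuity of $f$ and Lemma \ref{joinmap}(i), the sequence $\{f|_{S_k^n}\}$ is null-convergent, so there exists $N\in\Bbb{N}$ with $Im(f|_{S_k^n})\subseteq U$ for every $k\geq N$.

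Next, I would introduce the auxiliary map $g:(\Bbb{H}^n,\theta)\to(X,x_0)$ defined by $g|_{S_k^n}=f|_{S_k^n}$ for $k<N$ and $g|_{S_k^n}=c_{x_0}$ for $k\geq N$. Since the sequence $\{g|_{S_k^n}\}$ is eventually constant at $x_0$ and hence null-convergent, Lemma \ref{joinmap}(i) yields continuity of $g$, and by construction $[g]=\beta([f|_{S_1^n}],\ldots,[f|_{S_{N-1}^n}],e,e,\ldots)$ lies in $Im(\beta)$. Thus it suffices to exhibit a pointed homotopy $f\simeq g$ relative to $\{\theta\}$.

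I would produce this homotopy via Lemma \ref{joinmap}(iii). On each sphere $S_k^n$ with $k<N$ the maps $f|_{S_k^n}$ and $g|_{S_k^n}$ agree, so the constant homotopy works. On the tail $\widetilde{\bigvee}_{k\geq N}S_k^n$, which is entirely mapped into $U$, define $G(s,t)=H(f(s),t)$; then $G(s,0)=f(s)$, $G(s,1)=x_0$, and $G(\theta,t)=H(x_0,t)=x_0$, so $G$ is a continuous homotopy relative to $\{\theta\}$ from $f|_{\widetilde{\bigvee}_{k\geq N}S_k^n}$ to the constant map $c_{x_0}$. Lemma \ref{joinmap}(iii) then assembles these pieces into a single homotopy $f\simeq g$ rel $\{\theta\}$, showing $[f]=[g]\in Im(\beta)$, as required.

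The main obstacle is ensuring the contracting homotopy $H$ is basepoint-preserving, i.e.\ $H(x_0,t)=x_0$, so that the induced $G$ on the tail is genuinely a based homotopy and can be plugged into Lemma \ref{joinmap}(iii). This is the pointed interpretation of semilocal strong contractibility following \cite{eda}; without it, one would have to replace $H$ by a corrected homotopy obtained by absorbing the basepoint-trajectory loop $\alpha(t)=H(x_0,t)$, invoking the fact that the $\pi_1(X,x_0)$-action on $\pi_n(X,x_0)$ fixes the trivial element to pass from freely null-homotopic to based null-homotopic while preserving the tail estimate.
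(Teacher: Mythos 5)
Your proof is correct and follows essentially the same route as the paper: the paper shows the map $\varphi$ of (I) restricts to an isomorphism onto ${\prod}^w_{i\in\Bbb{N}}\pi_n(X,x_0)$, while you show its inverse $\beta$ from Lemma \ref{weakdir} is surjective, but the substance --- the tail of $f$ lands in the neighborhood $U$ whose inclusion contracts to $x_0$ in $X$, hence is null-homotopic rel $\{\theta\}$, and Lemma \ref{joinmap}(iii) assembles the pieces --- is identical. Your caveat about the contraction being basepoint-preserving agrees with the paper's own reading of the definition (``null-homotopic \ldots relative to the point $x_0$''), so no correction of the homotopy is needed.
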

\begin{proof}
Consider the homomorphism $\varphi:{\mathcal{H}}_n(X,x_0)\rightarrow {\prod}{\pi}_n(X,x_0)$ defined in $(I)$. Since $X$ is semilocally strongly contractible at $x_0$, there exists an open set $U$ containing $x_0$ with null-homotopic inclusion map $i:U\hookrightarrow X$ relative to the point $x_0$.
For each continuous map $f:({\Bbb{H}}^n,\theta)\longrightarrow (X,x_0)$, there exists  $K\in\Bbb{N}$ such that if $k\geq K$, then $Im(f{\mid}_{S_k^n})\subseteq U$ and so $Im(f{\mid}_{{\tilde{\bigvee}}_{k\geq K}S_k^n})\subseteq U$. Therefore $i\circ f{\mid}_{{\tilde{\bigvee}}_{k\geq K}S_k^n}$ is null-homotopic in $X$ relative to the point $\theta$. Since $i$ is the inclusion map and $Im(f{\mid}_{{\tilde{\bigvee}}_{k\geq K}S_k^n})\subseteq U$, $f{\mid}_{{\tilde{\bigvee}}_{k\geq K}S_k^n}$ is null-homotopic in $X$ relative to the point $\theta$. Hence if $k\geq K$, then $f{\mid}_{S_k^n}$ is null-homotopic relative to the point $\theta$ and so $\varphi$ maps $f$ to an element of ${\prod}_{i\in\Bbb{N}}^w{\pi}_n(X,x_0)$. Thus $Im(\varphi)\subseteq{\prod}_{i\in\Bbb{N}}^w{\pi}_n(X,x_0)$ and easily seen that the equality holds.

For injectivity, let $[f]\in{\mathcal{H}}_n(X,x_0)$ with $\varphi([f])=(e,e,...)$, using the above technique, there exists $K\in \Bbb{N}$ such that  $f{\mid}_{{\tilde{\bigvee}}_{k\geq K}S_k^n}$ is null-homotopic in $X$ relative to the point $\theta$. Since $\varphi([f])=(e,e,...)$, $f{\mid}_{S_k^n}\simeq c_{x_0}\ \ rel\{\theta\}$, for all $k<K$. Now, Lemma \ref{joinmap} $(iii)$ implies that $f$ is null-homotopic relative to the point $\theta$.
\end{proof}

%--------------------------------------------------------------------------------------------------------------------------------------------------

Note that by the above theorem we can compute the Hawaiian group of some more spaces than Theorem 1.1. As an example, the join space $X={\bigvee}_{i\in \Bbb{N}}S^n$
is not first countable but using Theorem \ref{semistrnog} we have $${\mathcal{H}}_n(X,x_0)\cong{\prod}_{i\in\Bbb{N}}^w{\pi}_n(X,x_0).$$

%---------------------------------------------------------------------------------------------------------------------

\begin{definition}
Let $(X,x_0)$ be a pointed space and $n\geq 1$. We define $L_n(X,x_0)$ to be a subset  of ${\prod}_{i\in\Bbb{N}}\pi_n(X,x_0)$ consisting of all sequences of homotopy classes $\{[f_i]\}$,
where $\{f_i\}$ is null-convergent.
\end{definition}

%---------------------------------------------------------------------------------------------------------------------

\begin{theorem}
Let $(X,x_0)$ be a pointed space and $\varphi:{\mathcal{H}}_n(X,x_0)\rightarrow {\prod}{\pi}_n(X,x_0)$ be the homomorphism (I), then $Im(\varphi)=L_n(X,x_0)$. In particular, $L_n(X,x_0)$ is a subgroup of ${\prod}_{i\in\Bbb{N}}{\pi}_n(X,x_0)$.
 \end{theorem}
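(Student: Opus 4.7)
The plan is to prove both inclusions separately, with the key tool being Lemma \ref{joinmap}$(i)$, which characterizes continuity of maps on $\Bbb{H}^n$ exactly in terms of null-convergence of their restrictions to the sphere summands.

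For the inclusion $Im(\varphi)\subseteq L_n(X,x_0)$, I would start with an arbitrary class $[f]\in {\mathcal{H}}_n(X,x_0)$ represented by a continuous map $f:(\Bbb{H}^n,\theta)\to (X,x_0)$. By Lemma \ref{joinmap}$(i)$, the sequence of restrictions $\{f{\mid}_{S_k^n}\}_{k\in\Bbb{N}}$ is null-convergent. Since $\varphi([f])=([f{\mid}_{S_1^n}],[f{\mid}_{S_2^n}],\ldots)$, this exhibits a null-convergent sequence of representatives for the coordinates of $\varphi([f])$, so $\varphi([f])\in L_n(X,x_0)$ by definition.

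For the reverse inclusion $L_n(X,x_0)\subseteq Im(\varphi)$, I would start with an arbitrary element $([f_k])_{k\in\Bbb{N}}\in L_n(X,x_0)$. By the definition of $L_n(X,x_0)$, we may choose representatives $f_k:(S_k^n,a)\to (X,x_0)$ such that $\{f_k\}_{k\in\Bbb{N}}$ is null-convergent. Define $f:(\Bbb{H}^n,\theta)\to (X,x_0)$ by $f{\mid}_{S_k^n}=f_k$. Then Lemma \ref{joinmap}$(i)$ applies in the converse direction to yield that $f$ is continuous. Hence $[f]\in {\mathcal{H}}_n(X,x_0)$ and $\varphi([f])=([f_k])$, giving $([f_k])\in Im(\varphi)$.

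For the final assertion, I would simply observe that $\varphi$ is a group homomorphism (as noted already in Theorem 1.1 and in the setup), so its image $Im(\varphi)$ is automatically a subgroup of ${\prod}_{i\in\Bbb{N}}{\pi}_n(X,x_0)$. The equality $Im(\varphi)=L_n(X,x_0)$ just established then forces $L_n(X,x_0)$ to be a subgroup as well. I do not expect any serious obstacle here; the entire proof is essentially a direct translation of Lemma \ref{joinmap}$(i)$ into the language of homotopy classes, with the only subtlety being to remember that membership in $L_n(X,x_0)$ requires only the \emph{existence} of some null-convergent choice of representatives, which is precisely what allows the second inclusion to go through.
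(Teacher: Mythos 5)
Your proposal is correct and follows essentially the same route as the paper's own proof: both directions are direct applications of Lemma \ref{joinmap}$(i)$, with the forward inclusion coming from continuity of $f$ forcing null-convergence of the restrictions, and the reverse inclusion coming from gluing a null-convergent choice of representatives into a continuous map on $\Bbb{H}^n$. Your explicit remark that membership in $L_n(X,x_0)$ only requires the existence of some null-convergent system of representatives is a point the paper leaves implicit, but it does not change the argument.
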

\begin{proof}
 Let $f:({\Bbb{H}^n},\theta)\rightarrow(X,x_0)$ be a map and $U$ be an open set in $X$ containing $x_0$. Since $f$ is continuous by Lemma \ref{joinmap} $(i)$,  $\{f{\mid}_{S_k^n}\}_{k\in\Bbb{N}}$ is null-convergent. Therefore $Im\varphi\subseteq L_n(X,x_0)$. Conversely,
 let $\{f_k:(S_k^n,\theta)\rightarrow(X,x_0)\}$ be a null-convergent sequence. Put $f{\mid}_{S_k^n}=f_k$, then by Lemma \ref{joinmap} $(i)$ $f$ is continuous and we have $\varphi([f])=([f_1],[f_2],...)$. Thus $\varphi:{\mathcal{H}}_n(X,x_0)\longrightarrow L_n(X,x_0)$ is an epimorphism and the result holds.
\end{proof}

%------------------------------------------------------------------------------------------------------------------------------------------

\begin{lemma}
Let $(X,x_0)$ be $n$-semilocally simply connected, then
$$L_n(X,x_0)={\prod}_{i\in\Bbb{N}}^w{\pi}_n(X,x_0).$$
In particular, if $n\geq2$ and  $\varphi$ is the homomorphism $(I)$, then
$${\mathcal{H}}_n(X,x_0)\cong{\prod}_{i\in\Bbb{N}}^w{\pi}_n(X,x_0)\oplus Ker(\varphi).$$
\end{lemma}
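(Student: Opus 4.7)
The plan is to handle the two assertions separately, proving first the set-theoretic equality $L_n(X,x_0) = \prod^w_{i\in\mathbb{N}} \pi_n(X,x_0)$, and then deducing the direct sum decomposition of $\mathcal{H}_n(X,x_0)$ via a splitting of the associated short exact sequence.

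For the first assertion, the inclusion $\prod^w_{i\in\mathbb{N}} \pi_n(X,x_0) \subseteq L_n(X,x_0)$ is immediate: if $([f_i])_i$ is trivial for all $i > m$, then representing the trivial classes by the constant map $c_{x_0}$ gives a sequence $\{f_i\}$ that is eventually constant at $x_0$, hence null-convergent in any neighborhood of $x_0$. For the reverse inclusion, suppose $\{[f_i]\} \in L_n(X,x_0)$ with $\{f_i\}$ null-convergent. Using $n$-semilocal simple connectedness at $x_0$, choose a neighborhood $U$ of $x_0$ such that the inclusion-induced map $i_\ast : \pi_n(U,x_0) \to \pi_n(X,x_0)$ is the zero homomorphism. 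By null-convergence there exists $K \in \mathbb{N}$ with $\mathrm{Im}(f_i) \subseteq U$ for every $i \geq K$, and each such $f_i$ therefore factors as $i \circ \tilde f_i$ for $\tilde f_i : (S^n,a) \to (U,x_0)$. Hence $[f_i] = i_\ast[\tilde f_i] = e$ for all $i \geq K$, which says exactly that $([f_i]) \in \prod^w_{i\in\mathbb{N}}\pi_n(X,x_0)$.

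For the second assertion, I would feed this into the short exact sequence
\begin{equation*}
0 \longrightarrow \ker(\varphi) \longrightarrow \mathcal{H}_n(X,x_0) \overset{\varphi}{\longrightarrow} L_n(X,x_0) \longrightarrow 0,
\end{equation*}
which is exact at $L_n(X,x_0)$ by the preceding theorem identifying $\mathrm{Im}(\varphi)$ with $L_n(X,x_0)$, and at $\mathcal{H}_n(X,x_0)$ by definition of the kernel. Under the hypothesis, $L_n(X,x_0) = \prod^w_{i\in\mathbb{N}}\pi_n(X,x_0)$ by the first part. The splitting is provided by the monomorphism $\beta$ of Lemma \ref{weakdir}: given $([f_1],\ldots,[f_m],e,e,\ldots)$, one checks directly from the definitions that $\varphi\bigl(\beta([f_1],\ldots,[f_m],e,e,\ldots)\bigr) = ([f_1],\ldots,[f_m],e,e,\ldots)$, so $\varphi \circ \beta = \mathrm{id}_{\prod^w \pi_n(X,x_0)}$.

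Finally, because $n \geq 2$, Theorem \ref{abel} gives that $\mathcal{H}_n(X,x_0)$ is abelian, so a split short exact sequence of abelian groups yields the desired internal direct sum decomposition $\mathcal{H}_n(X,x_0) \cong \ker(\varphi) \oplus \prod^w_{i\in\mathbb{N}}\pi_n(X,x_0)$, with the explicit isomorphism $[f] \mapsto \bigl([f]\cdot \beta(\varphi([f]))^{-1},\,\varphi([f])\bigr)$. The one subtle point (rather than a real obstacle) is just the honest verification that $\varphi\circ\beta$ is the identity, which requires only tracing the restriction to each $S_k^n$ through both constructions.
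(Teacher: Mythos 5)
Your proof is correct and follows essentially the same route as the paper: the same use of the neighborhood $U$ from $n$-semilocal simple connectedness to kill $[f_i]$ for large $i$, and the same split short exact sequence with the section $\beta$ from Lemma \ref{weakdir}. You even state the splitting identity in the correct direction ($\varphi\circ\beta=\mathrm{id}$), whereas the paper's text writes $\beta\circ\varphi=\mathrm{id}$, which is evidently a slip since $\varphi$ need not be injective.
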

\begin{proof}
Since $X$ is $n$-semilocally simply connected at $x_0$, there exists a neighborhood $U$ of $x_0$ such that the homomorphism $i_{\ast}:{\pi}_n(U,x_0)\rightarrow {\pi}_n(X,x_0)$  induced by the inclusion map $i:U\rightarrow X$ is trivial. If $\{[f_k]\}\in L_n(X,x_0)$, then there exists $K$ such that $Im(f_k)\subseteq U$, for all $k\geq K$. Since  $i_\ast:{\pi}_n(U,x_0)\rightarrow {\pi}_n(X,x_0)$ is the trivial homomorphism,  $i\circ f_k$ and so $f_k$ is null-homotopic in $X$, for all  $k\geq K$. Hence $L_n(X,x_0)\subseteq{\prod}_{i\in\Bbb{N}}^w{\pi}_n(X,x_0)$,
one can easily see the reverse inclusion.

 Consider the following exact sequence
$$0\rightarrow Ker(\varphi)\rightarrow {\mathcal{H}}_n(X,x_0)\st{\varphi}{\rightarrow} {\prod}_{i\in\Bbb{N}}^w{\pi}_n(X,x_0)\rightarrow 0.$$
If $n\geq2$, then by Theorem \ref{abel}, ${\mathcal{H}}_n(X,x_0)$ is abelian. It is routine to check that $\beta\circ\varphi=id$, where
$\beta:{\prod}_{i\in \Bbb{N}}^w{\pi}_n(X,x_0)\longrightarrow {\mathcal{H}}_n(X,x_0)$ is the monomorphism defined in the proof of Lemma \ref{weakdir}.
 Hence we have ${\mathcal{H}}_n(X,x_0)\cong{\prod}_{i\in\Bbb{N}}^w{\pi}_n(X,x_0){\oplus} Ker(\varphi)$.
 \end{proof}

%---------------------------------------------------------------------------------------------------------------------

\begin{theorem}\label{main}
Let ${\{(X_i,x_i)\}}_{i\in{\Bbb{N}}}$ be a family of locally strongly contractible, first countable pointed spaces. If $X={\widetilde{\bigvee}}_{i\in{\Bbb{N}}}(X_i,x_i)$
is the weak join of the family ${\{(X_i,x_i)\}}_{i\in{\Bbb{N}}}$ and $x_{\ast}$ is the common point, then $${\mathcal{H}}_n(X,x_\ast) \cong L_n(X,x_{\ast}).$$
\end{theorem}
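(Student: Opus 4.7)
The previous theorem furnishes a surjective homomorphism $\varphi : \mathcal{H}_n(X, x_\ast) \to L_n(X, x_\ast)$ via the map (I), so it suffices to prove that $\varphi$ is injective under the new hypotheses. Suppose $[f] \in \ker\varphi$, so that each restriction $f|_{S_k^n}$ is null-homotopic in $X$ relative to the basepoint. My plan is to manufacture null-homotopies $H_k : f|_{S_k^n} \simeq c_{x_\ast}$ rel $\{\theta\}$ whose combined family is null-convergent; once this is done, Lemma \ref{joinmap}(ii) glues the $H_k$ into a continuous null-homotopy of $f$ itself, forcing $[f] = 0$.

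The key construction uses both hypotheses on the factors. For each $i$, first countability together with local strong contractibility at $x_i$ supplies a descending neighborhood basis $V_i^{(1)} \supseteq V_i^{(2)} \supseteq \cdots$ of $x_i$ in $X_i$, together with contractions $c_i^{(j)} : V_i^{(j+1)} \times I \to V_i^{(j)}$ pulling $V_i^{(j+1)}$ to $x_i$ inside $V_i^{(j)}$ while fixing $x_i$ throughout. Any basic neighborhood of $x_\ast$ in the weak join takes the form $U_{F,\mathbf{j}} = \bigcup_{i \in F} V_i^{(j_i)} \cup \bigcup_{i \notin F} X_i$ with $F \subset \mathbb{N}$ finite; these exhaust a neighborhood basis of $x_\ast$. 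By continuity of $f$, for each such $U_{F,\mathbf{j}}$ there is a threshold $K$ beyond which $f(S_k^n) \subseteq U_{F,\mathbf{j}}$.

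For $k \geq K$, I would decompose $S_k^n$ along the closed preimages $f|_{S_k^n}^{-1}(X_i)$, which cover $S_k^n$ and meet pairwise in $f|_{S_k^n}^{-1}(x_\ast)$. On the pieces with $i \in F$, use the contraction $c_i^{(j_i)}$ to shrink the piece-homotopy into $V_i^{(j_i)} \subseteq U_{F,\mathbf{j}}$; on the pieces with $i \notin F$, use a null-homotopy landing in $X_i \subseteq U_{F,\mathbf{j}}$, available by applying the continuous retraction $r_i : X \to X_i$ (collapsing $\bigcup_{\ell \ne i} X_\ell$ to $x_\ast$) to the given null-homotopy of $f|_{S_k^n}$ in $X$. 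Every piece-homotopy is constant at $x_\ast$ along the common seam $f|_{S_k^n}^{-1}(x_\ast)$, so the assembled $H_k$ is well-defined with image in $U_{F,\mathbf{j}}$; for the remaining finitely many $k < K$ any null-homotopy works. Letting $U_{F,\mathbf{j}}$ shrink (enlarging $F$ and the $j_i$) then produces null-convergence of $(H_k)$.

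The main obstacle I anticipate is continuity of the piecewise-defined $H_k$ on $S_k^n$ in the weak-join topology when $f|_{S_k^n}$ crosses through infinitely many $X_i$'s: near each seam point in $f|_{S_k^n}^{-1}(x_\ast)$ one must verify that the piece-homotopies on different $X_i$'s all tend uniformly to $x_\ast$. By design, every piece-homotopy is contained in $U_{F,\mathbf{j}}$ and fixes $x_\ast$ throughout, so this continuity reduces to the defining property of the weak-join topology at $x_\ast$ and simultaneously yields the required null-convergence of $(H_k)$. With $(H_k)$ null-convergent, Lemma \ref{joinmap}(ii) supplies the desired null-homotopy of $f$, completing the proof of injectivity and hence the claimed isomorphism $\mathcal{H}_n(X, x_\ast) \cong L_n(X, x_\ast)$.
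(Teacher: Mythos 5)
Your overall strategy (kill $\ker\varphi$ by producing a null-convergent family of null-homotopies $H_k$ and invoking Lemma \ref{joinmap}(ii)) is the right one, but the assembly of $H_k$ has a genuine gap. On a piece $P_i=f|_{S_k^n}^{-1}(X_i)$ with $i\notin F$ you take $r_i\circ F_k$ restricted to $P_i\times I$, where $F_k$ is the given null-homotopy of $f|_{S_k^n}$. This is indeed a null-homotopy of $f|_{P_i}$ with image in $X_i$, but it is \emph{not} constant at $x_\ast$ along the seam: $F_k$ is a homotopy rel the single point $\theta$ only, so for $y\in f|_{S_k^n}^{-1}(x_\ast)$ and $0<t<1$ the point $F_k(y,t)$ may be anywhere in $X$, and $r_i(F_k(y,t))$ is then an arbitrary point of $X_i$. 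Two pieces $P_i$ and $P_{i'}$ with $i\neq i'$ meet exactly in $f|_{S_k^n}^{-1}(x_\ast)$, and there their homotopies are $r_i(F_k(y,t))$ versus $r_{i'}(F_k(y,t))$, which disagree whenever $F_k(y,t)\neq x_\ast$; so the glued $H_k$ is not well defined. The sentence ``every piece-homotopy is constant at $x_\ast$ along the common seam'' is the load-bearing claim of your construction and it is unjustified (and false in general). It is also what your continuity argument at seam points rests on, and that point would need independent care anyway, since the pasting lemma does not apply to an infinite family of closed pieces.

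The paper avoids exactly this by never cutting $S_k^n$ along $f^{-1}(x_\ast)$. It uses the single globally defined retraction $R_m:X\to\widetilde{\bigvee}_{i\geq m}X_i$ collapsing the finitely many head factors, shows $f|_{S_k^n}\simeq R_m\circ f|_{S_k^n}$ rel $\{\theta\}$ \emph{inside} $U_{m-1}$ by contracting the finitely many head pieces $V_m^i\hookrightarrow V_{m-1}^i$ (these finitely many contractions, joined with the identity on the tail, form one continuous homotopy $H_m:U_m\times I\to U_{m-1}$ of the inclusion to $R_m$), and then takes $G_k=R_m\circ F_k$ as the null-homotopy of the replaced map. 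Being a composition of globally continuous maps, $G_k$ requires no gluing, and $Im(G_k)\subseteq\widetilde{\bigvee}_{i\geq m}X_i\subseteq U_m$ gives the null-convergence. If you replace your factor-by-factor use of $r_i$ with this single tail retraction, and perform your contraction step for the finitely many $i\in F$ as one homotopy of neighborhoods rather than piecewise on $S_k^n$, your argument becomes the paper's proof.
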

\begin{proof}
Let  $\varphi:{\mathcal{H}}_n(X,x_\ast)\rightarrow L_n(X,x_\ast)$ be the natural epimorphism and $[f]$ be an element of ${\mathcal{H}}_n(X,x_\ast)$ so that $\varphi([f])=(e,e,...)$.  Then $f{\mid}_{S^n_k}\simeq c_{x_\ast}\ rel \{\theta\}$ with a homotopy mapping $F_k:{S_k^n}\times I\rightarrow X$, for $k\in\Bbb{N}$. Since $X_i$ is locally strongly contractible and first countable at $x_i$, for $i\in\Bbb{N}$, there exists a nested local basis $\{V_j^i\}_{j\in\Bbb{N}}$ at $x_i$ such that the inclusion map $J_j^i:V_j^i\rightarrow V_{j-1}^i$ is null-homotopic to $x_i$. Let $\{U_m\}_{m\in\Bbb{N}}$ be a  local basis at $x_\ast$ obtained by $U_m=({\bigvee}_{i<m}V_m^i)\bigvee({\widetilde{\bigvee}}_{i\geq m}X_i)$. For each $m\in \Bbb{N}$, there is $K_m$ such that if $k\geq K_m$, then $Im(f{\mid}_{S^n_k})\subseteq U_{m}$. Let $R_m:X\rightarrow {\widetilde{\bigvee}}_{i\geq m}X_i$ be the natural contraction. We show that $f{\mid}_{S^n_k}$ is  homotopic to the map $f_k:=R_m\circ f{\mid}_{S^n_k}\ rel\ \{\theta\}$ in $U_m$. To prove this, let $J_m^i:V_m^i\rightarrow V^i _{m-1}$ be the inclusion map which is null-homotopic with homotopy mapping $H_m^i$. Consider the inclusion map $J_m:U_m\longrightarrow U_{m-1}$ and define $H_m:U_m\times I\rightarrow U_{m-1}$ by joining of all the mapping $H_m^i$'s, for $i<m$, and identity map for the others. So $H_m:J_m\simeq R_m\ rel\ \{x_\ast\}$  and hence  $H_m(f{\mid}_{S^n_k},-):J_m\circ f{\mid}_{S^n_k}\simeq R_m\circ f{\mid}_{S^n_k}\ rel\ \{\theta\}$. We define $\tilde{f}{\mid}_{S^n_k}:=f{\mid}_{S^n_k}$ and $G_k:{S_k^n}\times I\rightarrow X$ by $G_k=F_k$  when $k<K_1$. For $K_m\leq k<K_{m+1}$, we define  $\tilde{f}{\mid}_{S^n_k}=f_k$ and $ G_k:{S_k^n}\times I\rightarrow {\widetilde{\bigvee}}_{i\geq m}X_i$ by $G_k=R_m oF_k$. Finally, we define $G:{\Bbb{H}}^n\times I\rightarrow X$ by $G {\mid}_{S^n_k\times I}=G_k$. Using Lemma \ref{joinmap} $(ii)$, $G$ is continuous and $G:\tilde{f}\simeq c \ rel\ \{\theta\}$. Since $f\simeq\tilde{f} \ rel\ \{\theta\}$, the result holds.
\end{proof}

%------------------------------------------------------------------------------------------------------------------%

 Eda and Kawamura \cite[Theorem 1.1]{eda} proved the following result.

\emph{Let} $n\geq2$ \emph{and} $X_i$\emph{ be an }$(n-1)$-\emph{connected, Tikhonov space which is semilocally strongly contractible at }$x_i$, \emph{for each} $i\in I$.\emph{ If} $x_\ast$\emph{ is the common point, then} $${\pi}_n({\widetilde{\bigvee}}_{i\in I}(X_i,x_i),x_\ast)\cong {\widetilde{\prod}}_{i\in I}{\pi}_n(X_i,x_i),\ \ \ \ \ \ \ (II)$$
\emph{where }${\widetilde{\prod}}_{i\in I}{\pi}_n(X_i,x_i)$\emph{ is a subgroup of }${\prod}_{i\in I}{\pi}_n(X_i,x_i)$ \emph{consisting of all }$f$'s \emph{such that the set }$\{i\in I: f(i)\neq 0\}$\emph{ is countable.}

We can improve the above result as follows when the set of indexes is countable.

%-------------------------------------------------------------------------------------------------------------------

\begin{theorem}\label{mainH}
Let $n\geq2$ and $\{(X_i,x_i)\}_{i\in\Bbb{N}}$ be a family of $(n-1)$-connected Tikhonov spaces which are locally strongly contractible and first countable pointed spaces. If $X={\widetilde{\bigvee}}_{i\in{\Bbb{N}}}(X_i,x_i)$ and $x_{\ast}$ is the common point, then
$${\mathcal{H}}_n(X,x_\ast) \cong{\prod}_{i\in\Bbb{N}}{\prod}_{k\in\Bbb{N}}^w{\pi}_n(X_i,x_i).$$
\end{theorem}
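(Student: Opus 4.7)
The plan is to combine Theorem~\ref{main} with the Eda--Kawamura formula $(II)$. Since each $X_i$ is locally strongly contractible and first countable, Theorem~\ref{main} gives $\mathcal{H}_n(X,x_\ast)\cong L_n(X,x_\ast)$, so it suffices to identify $L_n(X,x_\ast)$ with $\prod_{i\in\mathbb{N}}\prod_{k\in\mathbb{N}}^w\pi_n(X_i,x_i)$. Local strong contractibility implies semilocal strong contractibility, and the countable index set makes the countable-support condition in $(II)$ vacuous, so $(II)$ provides $\pi_n(X,x_\ast)\cong\prod_{i\in\mathbb{N}}\pi_n(X_i,x_i)$ via the natural retractions $p_i\colon X\to X_i$. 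Under these identifications, an element $([f_k])_k\in L_n(X,x_\ast)$ corresponds to a double sequence $(b_{i,k})_{i,k\in\mathbb{N}}$ with $b_{i,k}=[p_i\circ f_k]\in\pi_n(X_i,x_i)$, and the task is to show that null-convergence of $\{f_k\}$ is equivalent to $(b_{i,k})_k$ being eventually zero in $k$ for each fixed $i$.

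For the forward inclusion I would reuse the nested local bases $\{V_m^i\}$ at each $x_i$ and the standard basis $U_m=(\bigvee_{i<m}V_m^i)\vee(\widetilde{\bigvee}_{i\geq m}X_i)$ at $x_\ast$ from the proof of Theorem~\ref{main}. Fixing $i$ and taking $m=i+1$, null-convergence of $\{f_k\}$ yields $K$ with $\mathrm{Im}(f_k)\subseteq U_{i+1}$ for $k\geq K$, so $p_i\circ f_k$ has image in $V_{i+1}^i$ and is therefore null-homotopic in $X_i$ by local strong contractibility. Hence $b_{i,k}=0$ eventually in $k$.

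The reverse inclusion is the main obstacle, since $L_n$ is defined through actual null-convergent \emph{representatives}, not merely via homotopy classes. Starting from $(b_{i,k})_{i,k}\in\prod_i\prod_k^w\pi_n(X_i,x_i)$, let $K_i=\min\{k:b_{i,j}=0\text{ for all }j\geq k\}$ and set $m(k)=\max\{m:K_j\leq k\text{ for all }j<m\}$, which tends to infinity as $k\to\infty$. For each $k$, the tuple $(b_{i,k})_i$ is supported on indices $i\geq m(k)$, so it defines an element of $\prod_{i\geq m(k)}\pi_n(X_i,x_i)$; applying $(II)$ to the sub-weak-join $\widetilde{\bigvee}_{i\geq m(k)}(X_i,x_i)$ yields a representative $f_k\colon S_k^n\to \widetilde{\bigvee}_{i\geq m(k)}X_i\subseteq U_{m(k)}$. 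Since $m(k)\to\infty$, the family $\{f_k\}$ is null-convergent, and Lemma~\ref{joinmap}(i) assembles it into a continuous $f\colon\mathbb{H}^n\to X$. Verifying that this $f$ realizes the prescribed double sequence and that the assignment inverts the natural map is then routine bookkeeping, using the functoriality of $(II)$ with respect to the retractions $p_i$ and the fact that both correspondences are group homomorphisms.
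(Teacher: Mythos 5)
Your proposal follows the same route as the paper's proof: reduce via Theorem~\ref{main} and the Eda--Kawamura isomorphism $(II)$ to identifying the image inside $\prod_{i\in\Bbb{N}}\prod_{k\in\Bbb{N}}\pi_n(X_i,x_i)$, prove the forward inclusion with the nested basis $U_m=(\bigvee_{i<m}V_m^i)\vee(\widetilde{\bigvee}_{i\geq m}X_i)$ exactly as the paper does, and prove surjectivity by producing null-convergent representatives whose images shrink into the $U_m$'s. The only difference is a detail in the last step: where you invoke the surjectivity of $(II)$ on the tail sub-joins $\widetilde{\bigvee}_{i\geq m(k)}X_i$ to obtain $f_k$, the paper builds $f_k$ explicitly as an infinite concatenation $f_k^1\ast f_k^2\ast\cdots$ with constant representatives chosen for the trivial classes; both constructions force $\mathrm{Im}(f_k)\subseteq U_m$ for all large $k$, so the arguments are essentially identical.
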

\begin{proof}
Using Theorem \ref{main} we have the monomorphism $\varphi:{\mathcal{H}}_n(X,x_\ast) \rightarrow{\prod}_{k\in\Bbb{N}}{\pi}_n(X,x_\ast)$ and by $(II)$ there exists an isomorphism $h:{\pi}_n(X,x_\ast)\rightarrow{\prod}_{i\in\Bbb{N}}{\pi}_n(X_i,x_i)$ defined by $h([g])=\{[r_i \circ g]\}_{i\in\Bbb{N}}$ when $r_i:X\rightarrow X_i$ is the natural contraction. Also, there is a natural isomorphism  ${\prod}_{k\in\Bbb{N}}{\prod}_{i\in\Bbb{N}}{\pi}_n(X_i,x_i)\cong{\prod}_{i\in\Bbb{N}}{\prod}_{k\in\Bbb{N}}{\pi}_n(X_i,x_i)$. Now, by the composition of these homomorphisms, we obtain a monomorphism $\psi:{\mathcal{H}}_n(X,x_\ast)\rightarrow{\prod}_{i\in\Bbb{N}}{\prod}_{k\in\Bbb{N}}{\pi}_n(X_i,x_i)$ with the rule $\psi([f])=\{\{[r_i \circ f{\mid}_{S^n_k}]\}_{k\in\Bbb{N}}\}_{i\in\Bbb{N}}$. We show that $Im(\psi)\subseteq{\prod}_{i\in\Bbb{N}}{\prod}_{k\in\Bbb{N}}^w{\pi}_n(X_i,x_i)$. Let $[f]\in {\mathcal{H}}_n(X,x_\ast)$ and let $\{U_m\}$ be the local basis defined in the proof of Theorem \ref{main}. By Lemma \ref{joinmap} there exists an increasing  sequence $\{K_i\}_{i\in\Bbb{N}}$ such that if $k\geq K_i$, then $Im(f{\mid}_{S^n_k})\subseteq U_{i+1}$. So $Im(r_{i}\circ f{\mid}_{S^n_k})\subseteq V_{i+1}^i$ and hence $r_{i}\circ f{\mid}_{S^n_k}$ is null-homotopic relative to $\{\theta\}$ in $X_i$. Thus for each $i\in\Bbb{N}$, all terms of the sequence $\{[r_i \circ f{\mid}_{S^n_k}]\}_{k\in\Bbb{N}}$  are trivial except a finite number. Hence $\{[r_i \circ f{\mid}_{S^n_k}]\}_{k\in\Bbb{N}}\in{\prod}_{k\in\Bbb{N}}^w{\pi}_n(X_i,x_i)$ and $\{\{[r_i \circ f{\mid}_{S^n_k}]\}_{k\in\Bbb{N}}\}_{i\in\Bbb{N}}\in {\prod}_{i\in\Bbb{N}}{\prod}_{k\in\Bbb{N}}^w{\pi}_n(X_i,x_i)$.

Now, let $\{\{[f_k^i]\}_{k\in\Bbb{N}}\}_{i\in\Bbb{N}}\in{\prod}_{i\in\Bbb{N}}{\prod}_{k\in\Bbb{N}}^w{\pi}_n(X_i,x_i)$ such that for each $i\in\Bbb{N}$, the sequence $f_k^i$ is the constant map except a finite number. Consider $\{[f_k^i]\}_{i\in\Bbb{N}}\}_{k\in\Bbb{N}}\in{\prod}_{k\in\Bbb{N}}{\prod}_{i\in\Bbb{N}}{\pi}_n(X_i,x_i)$, use  the convergent sequence $\{1/2^i\}_{i\in\Bbb{N}}$ to define  a product on infinite terms of $n$-loops. Let $f_k=f_k^1*f_k^2*...$, then $r_i\circ f_k=c_{x_1}*...*c_{x_{i-1}}*f_k^i*c_{x_{i+1}}*...\cong f_k^i$, so $h([f_k])=\{[f_k^i]\}_{k\in\Bbb{N}}$. We claim that $\{f_k\}_{k\in\Bbb{N}}$ is null-convergent. Given an open set $U$ containing $x_0$, there exists $U_m$ defined in the proof of Theorem \ref{main} such that $U_m\subseteq U$. For each $i\in\Bbb{N}$, there exists $K_i$ such that if $k\geq K_i$, then $f_k^i=c_{x_i}$. Now for each $m\in\Bbb{N}$, define $K=max\{K_1,k_2,...,K_{m-1}\}$. For $i\leq m-1$, if $k\geq K$, then $f_k^i=c_{x_i}$ and hence $Im(f_k^i)\subseteq U_m$. For $i\geq m$, we have $X_i\subseteq U_m$ and so $Im(f_k^i)\subseteq U_m$. Since $Im(f_k)={\bigcup}_{i\in\Bbb{N}}Im(f_k^i)$, if $k\geq K$, then $Im(f_k)\subseteq U_m$.
\end{proof}

%----------------------------------------------------------------------------------------------------------------------------

In \cite[Corollary 1.2]{eda} Eda and Kawamura computed homotopy and  homology groups of Hawaiian earring spaces as follows.

\emph{For the $m$-dimensional Hawaiian earring ${\Bbb{H}}^m$, $m\geq 2$, $\pi_n({\Bbb{H}}^m,\theta)$ is trivial for each $n$, $1\leq n\leq m-1 $ and ${\pi}_m({\Bbb{H}}^m,\theta)\cong {\Bbb{Z}}^\omega$, where ${\Bbb{Z}}^\omega$ means the direct product of countably many the infinite cyclic group, ${\Bbb{Z}}$.}

The following corollary is a consequence of Theorem \ref{mainH} and the above fact.
\begin{corollary}\label{2.12}
Let $n\geq 2$, then ${\mathcal{H}}_n(\Bbb{H}^n, \theta) \cong {\prod}_{i\in\Bbb{N}}{\prod}_{k\in\Bbb{N}}^w\Bbb{Z}$ and ${\mathcal{H}}_m(\Bbb{H}^n, \theta)$ is trivial,
when $1\leq m\leq n-1$.
\end{corollary}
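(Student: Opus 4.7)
The plan is to identify $\Bbb{H}^n$ with the weak join ${\widetilde{\bigvee}}_{i\in\Bbb{N}} S_i^n$ of its constituent $n$-spheres based at the origin $\theta$ and then feed this decomposition into the structural results already proved above, namely Theorem~\ref{main} and Theorem~\ref{mainH}. Before invoking them I would quickly verify that, for $n\geq 2$, each $S_i^n$ is $(n-1)$-connected, is a compact metric space (hence Tikhonov and first countable), and is locally strongly contractible at every point, since a small open geodesic disk on $S^n$ contracts to its centre inside a slightly larger disk. These checks dispose of the running hypotheses of both theorems in one line.

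For the first isomorphism I would then apply Theorem~\ref{mainH} directly, taking $X_i = S_i^n$ and keeping the homotopy exponent equal to $n$. Using $\pi_n(S^n, a)\cong\Bbb{Z}$, its conclusion becomes
$${\mathcal{H}}_n(\Bbb{H}^n,\theta)\;\cong\;\prod_{i\in\Bbb{N}}\prod_{k\in\Bbb{N}}^w \pi_n(S_i^n,a)\;\cong\;\prod_{i\in\Bbb{N}}\prod_{k\in\Bbb{N}}^w \Bbb{Z},$$
which is exactly the first assertion.

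For the vanishing statement, fix $1\leq m\leq n-1$ and apply Theorem~\ref{main} with exponent $m$ in place of $n$; its proof only invokes Lemma~\ref{joinmap}, which is stated for arbitrary $n\geq 1$, so nothing in that argument ties the dimension of the wedge summands to the homotopy exponent. This yields
$${\mathcal{H}}_m(\Bbb{H}^n,\theta)\;\cong\;L_m(\Bbb{H}^n,\theta)\;\subseteq\;\prod_{k\in\Bbb{N}} \pi_m(\Bbb{H}^n,\theta).$$
The Eda--Kawamura calculation recalled immediately before the corollary gives $\pi_m(\Bbb{H}^n,\theta)=0$ throughout the range $1\leq m\leq n-1$, so the ambient product is trivial and the corollary follows.

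I do not anticipate any serious obstacle: the corollary is essentially a direct specialization of Theorem~\ref{mainH} together with the classical Eda--Kawamura computation of $\pi_m(\Bbb{H}^n)$. The only minor point that needs attention is the case $m=1$, for which Theorem~\ref{mainH} is unavailable because it assumes the homotopy exponent is at least $2$; this is why I would route the vanishing argument through the weaker Theorem~\ref{main} uniformly in $m$, rather than splitting into cases.
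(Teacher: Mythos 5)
Your proposal is correct and follows essentially the same route as the paper, which simply records the corollary as a consequence of Theorem~\ref{mainH} and the Eda--Kawamura computation of $\pi_m(\Bbb{H}^n,\theta)$ without spelling out details. Your extra care in routing the vanishing statement (in particular the case $m=1$) through Theorem~\ref{main} and the inclusion $L_m(\Bbb{H}^n,\theta)\subseteq\prod_{k\in\Bbb{N}}\pi_m(\Bbb{H}^n,\theta)$ is exactly the right way to fill the gap the paper leaves implicit.
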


Note that if $\theta'\in \Bbb{H}^n$ and $\theta'\neq \theta$, then by \cite[Theorem 1.1]{eda}
${\mathcal{H}}_n(\Bbb{H}^n, \theta') \cong {\prod}^{w}_{k\in\Bbb{N}}{\prod}_{i\in\Bbb{N}}\Bbb{Z}$, for $n\geq 2$. Answering a question of Fuchs' problem 76, it is known that ${\prod}_{i\in\Bbb{N}}{\prod}_{k\in\Bbb{N}}^w\Bbb{Z}\not\cong {\prod}^{w}_{k\in\Bbb{N}}{\prod}_{i\in\Bbb{N}}\Bbb{Z}$ (see \cite{Z}).
Hence by Corollary \ref{2.12} we have
${\mathcal{H}}_n(\Bbb{H}^n, \theta') \not\cong {\mathcal{H}}_n(\Bbb{H}^n, \theta)$,
for $n\geq 2$.
%------------------------------------------------------------------------------------------------------------------%

\begin{theorem}\label{pr}
For any family of spaces $\{X_i\}_{i\in I}$ and $n\geq 1$, the following isomorphism holds.
$${\mathcal{H}}_n({\prod}_{i\in I}X_i,x_\ast)\cong {\prod}_{i\in I} {\mathcal{H}}_n(X_i,x_i),$$
where $x_\ast =\{x_i\}_{i\in I}\in {\prod}_{i\in I}X_i$ .
\end{theorem}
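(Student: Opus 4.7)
The plan is to invoke the universal property of the product topology and mimic the standard argument that shows $\pi_n$ preserves arbitrary products. I will define a candidate isomorphism
$$\Psi:{\mathcal{H}}_n\bigl({\prod}_{i\in I}X_i,x_\ast\bigr)\lo {\prod}_{i\in I}{\mathcal{H}}_n(X_i,x_i),\qquad \Psi([f])=([p_i\circ f])_{i\in I},$$
where $p_i:{\prod}_{j\in I}X_j\rightarrow X_i$ denotes the $i$th projection. The fact that ${\mathcal{H}}_n$ is a covariant functor (as recalled in the introduction) makes each component $[f]\mapsto[p_i\circ f]={(p_i)}_*([f])$ a well-defined homomorphism, so $\Psi$ itself is a well-defined group homomorphism.

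For surjectivity I would take an arbitrary element $([g_i])_{i\in I}\in{\prod}_{i\in I}{\mathcal{H}}_n(X_i,x_i)$ with representatives $g_i:({\Bbb H}^n,\theta)\rightarrow (X_i,x_i)$ and assemble them into a single map $g:({\Bbb H}^n,\theta)\rightarrow({\prod}_{i\in I}X_i,x_\ast)$ by $g(x)=(g_i(x))_{i\in I}$. By the universal property of the product topology, $g$ is continuous because each $p_i\circ g=g_i$ is continuous, and $g(\theta)=x_\ast$ since each $g_i(\theta)=x_i$. By construction $\Psi([g])=([g_i])_{i\in I}$.

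For injectivity suppose $\Psi([f])=(e)_{i\in I}$, so for each $i\in I$ there is a based homotopy $H_i:(p_i\circ f)\simeq c_{x_i}$ rel $\{\theta\}$, with $H_i:{\Bbb H}^n\times I\rightarrow X_i$. Defining $H:{\Bbb H}^n\times I\rightarrow{\prod}_{i\in I}X_i$ coordinatewise by $H(x,t)=(H_i(x,t))_{i\in I}$ again gives a continuous map by the universal property (since $p_i\circ H=H_i$ for each $i$), and $H$ is based at $x_\ast$ throughout because $H_i(\theta,t)=x_i$ for all $t$ and $i$. Then $H:f\simeq c_{x_\ast}$ rel $\{\theta\}$, so $[f]=e$.

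There is no real obstacle: the entire proof rests on the universal property of products, which guarantees that both maps and based homotopies into ${\prod}_{i\in I}X_i$ are determined by, and can be freely assembled from, their projections to each factor. The only minor point to be careful about is that the domain ${\Bbb H}^n$ is a fixed space independent of $i$, so no null-convergence hypothesis from Lemma \ref{joinmap} is required when assembling $g$ or $H$; continuity is automatic componentwise.
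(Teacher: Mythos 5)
Your proposal is correct and follows essentially the same route as the paper: define the map via the induced homomorphisms of the projections and invert it using the universal property of the product, which the paper states tersely and you spell out via explicit surjectivity and injectivity (coordinatewise homotopy) arguments. Your closing remark that no null-convergence condition is needed because the domain $\Bbb{H}^n$ is fixed is also accurate.
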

\begin{proof}
Consider projection maps $p_j:{\prod}_{i\in I}X_i\longrightarrow X_j$ and induced homomorphisms ${\mathcal{H}}_n(p_j):{\mathcal{H}}_n({\prod}_{i\in I}X_i,x_\ast)\longrightarrow{\mathcal{H}}_n(X_j,x_j)$, and define
$$\psi=\{{\mathcal{H}}_n(p_i)\}_{i\in I}:{\mathcal{H}}_n({\prod}_{i\in I}X_i,x_\ast)\longrightarrow {\prod}_{i\in I} {\mathcal{H}}_n(X_i,x_i)$$
by $\psi([f])=\{{\mathcal{H}}_n(p_i)([f])\}_{i\in I}$.
It is easy to see that this homomorphism has an inverse ${\psi}^{-1}:\{[f_i]\}_{i\in I}\longmapsto [f]$, where f is the unique continuous map which comes from the universal property of the product.
\end{proof}

%------------------------------------------------------------------------------------------------------------------------

Note that if ${\mathcal{H}}_n(X,x_0)$ is trivial, then so is $\pi_n(X,x_0)$, but the converse is not true. The cone over the Hawaiian earring \cite[Remark 1]{karh} is a counterexample.

%------------------------------------------------------------------------------------------------------------------%

\begin{theorem}\label{cone}
Let $(X,x_0)$ be a pointed space and $C(X)$ be the cone over $X$. Suppose that  $\hat{x}_t= (x_0,t)$ is a point of $C(X)$
except the vertex ($t\neq 1$). Then $${\mathcal{H}}_n(C(X),\hat{x}_t)\cong\frac{{\mathcal{H}}_n(X,x_0)}{{\prod}_{i\in \Bbb{N}}^w{\pi}_n(X,x_0)}.$$
\end{theorem}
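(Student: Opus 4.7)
The plan is to consider the inclusion $j\colon X\hookrightarrow C(X)$ given by $j(x)=(x,t)$, which sends $x_0$ to $\hat{x}_t$ and whose image is the slice $X\times\{t\}$, disjoint from the vertex. I will work with the induced homomorphism $j_*\colon \mathcal{H}_n(X,x_0)\to \mathcal{H}_n(C(X),\hat{x}_t)$ and prove two things: that $j_*$ is surjective, and that $Ker(j_*)$ is exactly the image of the monomorphism $\beta\colon {\prod}_{i\in\Bbb{N}}^w\pi_n(X,x_0)\hookrightarrow\mathcal{H}_n(X,x_0)$ from Lemma \ref{weakdir}. The desired isomorphism will then follow from the first isomorphism theorem (the image of $\beta$ is automatically a normal subgroup as the kernel of a homomorphism).

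For surjectivity, given $f\colon(\Bbb{H}^n,\theta)\to(C(X),\hat{x}_t)$, I exploit that away from the vertex $C(X)$ is locally a product: for any neighborhood $V$ of $x_0$ in $X$ and any $0<\varepsilon<\min\{t,1-t\}$, the set $W=V\times(t-\varepsilon,t+\varepsilon)$ is an open neighborhood of $\hat{x}_t$ which admits a deformation retraction $r_u(x,s)=(x,(1-u)s+ut)$ onto $V\times\{t\}$ that fixes $\hat{x}_t$ and preserves $W$ (the second coordinate is a convex combination of two points of $(t-\varepsilon,t+\varepsilon)$). Applying $r$ on the tail spheres (those with $f(S_k^n)\subseteq W$) produces a null-convergent sequence of homotopies, since $W$ can be shrunk arbitrarily by choosing smaller $V$ and $\varepsilon$. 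On the finitely many remaining spheres the identity $\pi_n(C(X),\hat{x}_t)=0$ lets me null-homotope $f|_{S_k^n}$ individually to the constant map. Lemma \ref{joinmap}(iii) then assembles these into a homotopy $f\simeq\tilde f$ rel $\theta$, where $\tilde f$ factors through $X\times\{t\}\cong X$, so $[f]=j_*[g]$ for the corresponding $g\in\mathcal{H}_n(X,x_0)$.

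For the kernel, the inclusion $Im(\beta)\subseteq Ker(j_*)$ is straightforward: a representative of an element of $Im(\beta)$ is constant outside finitely many spheres, and on those finitely many spheres the composite with $j$ is null-homotopic in the contractible space $C(X)$, so Lemma \ref{joinmap}(iii) assembles the null-homotopies. The reverse containment is the substantive step. Given $g\colon\Bbb{H}^n\to X$ with $j\circ g\simeq c_{\hat{x}_t}$ rel $\theta$ via $H\colon\Bbb{H}^n\times I\to C(X)$, I apply the tube lemma to the compact set $\{\theta\}\times I\subseteq\Bbb{H}^n\times I$: for $W$ as above, $H^{-1}(W)$ contains a tube $U\times I$ with $U$ a neighborhood of $\theta$ in $\Bbb{H}^n$, and any such $U$ necessarily contains $\widetilde{\bigvee}_{k\ge K}S_k^n$ for some $K$. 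Since $W$ avoids the vertex, the projection $\pi\colon X\times(0,1)\to X$ is well defined on $W$, and $\pi\circ H$ restricted to $\widetilde{\bigvee}_{k\ge K}S_k^n\times I$ is a null-homotopy of $g|_{\widetilde{\bigvee}_{k\ge K}S_k^n}$ in $X$ rel $\theta$ (using that $\pi\circ j=\mathrm{id}_X$). Lemma \ref{joinmap}(iii) then lets me replace the tail of $g$ by the constant map, showing $g$ is homotopic to a representative with finite support, whence $[g]\in Im(\beta)$.

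The main obstacle is this reverse kernel inclusion: every individual sphere becomes trivial in the contractible cone, so no sphere-by-sphere argument can recover $[g]$ from $j_*[g]$. The key device is the tube lemma, which uses the global structure of the single null-homotopy $H$ to confine an \emph{entire} tail of the Hawaiian earring into a vertex-free neighborhood of $\hat{x}_t$; only after this confinement does the radial projection back to $X$ become available.
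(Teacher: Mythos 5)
Your proposal is correct and follows essentially the same route as the paper: the same homomorphism induced by $x\mapsto(x,t)$, surjectivity obtained by confining the tail of a map into the vertex-free part $X\times[0,1)$ and killing the finitely many remaining spheres via $\pi_n(C(X),\hat{x}_t)=0$, and the kernel computed with the tube lemma applied to $\{\theta\}\times I$. The only (immaterial) difference is that you retract the tail onto the slice $X\times\{t\}$ by an explicit fiberwise deformation, where the paper instead invokes Theorem \ref{pr} to identify ${\mathcal{H}}_n(X\times[0,1),\hat{x}_t)$ with ${\mathcal{H}}_n(X,x_0)$.
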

\begin{proof}
Let $i_t:X\rightarrow C(X)$ be the natural inclusion by $i_t(x)=(x,t)$. There exists a homomorphism
$\mu={\mathcal{H}}_n(i_t):{\mathcal{H}}_n(X,x_0)\rightarrow {\mathcal{H}}_n(C(X),\hat{x}_t)$ by $\mu([f])=[i_t\circ f]$, for each  $[f]\in {\mathcal{H}}_n(X,x_0)$. Since $X\times [0,1)$ is an open set in $C(X)$, by the topology of $\Bbb{H}^n$, for any $[g] \in  {\mathcal{H}}_n(C(X),{\hat{x}}_t)$, there exists $K$ in $\Bbb{N}$ such that $Im(g{\mid}_{S_k^n})\subseteq X\times [0,1)$, for $k\geq K$. We define $\overline{g}:\Bbb{H}^n\rightarrow C(X)$ by $\overline{g}{\mid}_{S_k^n}=c_{x_t}$ for $k<K$ and $\overline{g}{\mid}_{S_k^n}= {g}{\mid}_{S_k^n}$ otherwise. We show that $[\overline{g}]=[g]$. Since ${\pi}_n(C(X),\hat{x}_t)$ is trivial, there exist homotopy mappings $A_k:{g}{\mid}_{S_k^n}\cong c_{\hat{x}_t}\ rel\ \{\theta\}$, for each $k\in\Bbb{N}$. We define $A:\Bbb{H}^n\times I\rightarrow C(X)$ by $A{\mid}_{S_K^n}\times I=A_k$, for $k<K$ and $A{\mid}_{S_K^n\times I}(r,s)= g(r)$, for $k\geq K$ which is continuous by Lemma \ref{joinmap} and makes $g$ and $\overline{g}$  homotopic relative to $\{\theta\}$. Since $Im(\overline{g})\subseteq X\times [0,1)$, one can consider $[\overline{g}]\in{\mathcal{H}}_n(X\times [0,1),{\hat{x}}_t)$. Theorem \ref{pr} states that ${\mathcal{H}}_n(p_1):{\mathcal{H}}_n(X\times [0,1),{\hat{x}}_t)  \cong{\mathcal{H}}_n(X,x_0) $ is an isomorphism with an inverse which is induced by the injection  $j_t:X\longrightarrow X\times [0,1)$ by the rule $j_t(x)=(x,t)$. Thus there exists $[f]\in {\mathcal{H}}_n(X,x_0)$  such that ${\mathcal{H}}_n(j_t)([f])=[\overline{g}]$. Since $j_t(x)=i_t(x)$ for each $x\in X$, $j\circ j_t\circ f=i_t\circ f$, where $j:X\times [0,1)\rightarrow C(X)$ is the inclusion map. Therefore we have
${\mathcal{H}}_n(i_t)([f])=[i_tof]=[joj_tof]={\mathcal{H}}_n(j)([j_tof])={\mathcal{H}}_n(j)([\overline{g}])=[\overline{g}]=[g]$.
Hence $\mu([f])=[g]$ and  $\mu$ is an epimorphism.

Moreover, ${\prod}^w_{i\in \Bbb{N}}{\pi}_n(X,x_0)\subseteq ker\mu$. To prove this, let $[f]\in{\mathcal{H}}_n(X,x_0)$ and $\beta([f])=([f_1],[f_2],...,[f_k],e,e,...)\in{\prod}^w_{i\in \Bbb{N}}{\pi}_n(X,x_0)$. Since ${\pi}_n(C(X),\hat{x}_t)$ is trivial, the homomorphism ${i_t}_\ast:{\pi}_n(X,x_0)\rightarrow {\pi}_n(C(X),\hat{x}_t)$ is trivial and so $[i_tof_i]$ is the identity element of ${\pi}_n(C(X),\hat{x}_t)$, for $i\leq K$.  Hence $i_t\circ f_1,i_t\circ f_2,...,i_t\circ f_k,c_{\hat{x}_t},...$ is a sequence of null-homotopic maps relative to $\{\theta\}$ which is constant except a finite number and by Lemma \ref{joinmap} $(ii)$ we have $i_t\circ f$ is null-homotopic relative to $\{\theta\}$. Hence $\mu([f]={\mathcal{H}}_n(i_t)[f]$ is the identity element of ${\mathcal{H}}_n(C(X),\hat{x}_t)$ and thus $[f]\in ker\mu$.

  Now, let $\mu([f])=e$, then $[i_tof]=e$. If $i_t\circ f$ is null-homotopic in $C(X)$ with given homotopy $H:i_tof\simeq c_{\hat{x}_t}\ rel\ \{\theta\}$, then since $V=X\times [0,1)$ is an open set in $C(X)$ containing $\hat{x}_t$ and $H$ is continuous, $H^{-1}(V)$ is an open set that contains $(\theta, s)$, for every $s\in I$. Therefore, there exists $U_s\times J_s\subseteq  H^{-1}(V)$ for each $(\theta, s)$ such that $s\in J_s$ and $\theta\in U_s$. $\{J_s\}$ has a finite subcover $\{J_{s_l}| \ 1\leq l\leq m\}$ for $I$. Let  $U={\bigcap}_{l=1}^mU_{s_l} $, since  $U\times J_{s_l}\subseteq U_{s_l}\times J_{s_l}\subseteq H^{-1}(V)$ and $U\times I={\bigcup}_{l=1}^mU_{s_l}$, $U\times I\subseteq H^{-1}(V)$. Using the form of open sets in  the space $\Bbb{H}^n$ at  the point $\theta$, there exists $K$ in $\Bbb{N}$ such that for $k\geq K$, $S_k^n\subseteq U$ and then ${S_k^n\times I}\subseteq H^{-1}V$ or equivalently $Im(H{\mid}_{S_k^n\times I})\subseteq V$. We have $Im(i_t\circ f{\mid}_{S_k^n})\subseteq V=X\times [0,1)$, for $k\geq K$. If $f{\mid}_{S_k^n}$ is not null-homotopic in $X$, then $i_t\circ f{\mid}_{S_k^n}$ is not null-homotopic in $X\times [0,1)$. But $ H|_{S_k^n\times I}:i_t\circ f{\mid}_{S_k^n}\simeq c\ rel\ \{\theta\}$ in $X\times [0,1)$, thus for $k\geq K$, $f{\mid}_{S_k^n}$ is  null-homotopic relative to $\{\theta\}$. Hence $ker(\mu)\subseteq {\prod}_{i\in \Bbb{N}}^w{\pi}_n(X,x_0)$ and then  $ker(\mu)={\prod}_{i\in \Bbb{N}}^w{\pi}_n(X,x_0)$.
\end{proof}

%-------------------------------------------------------------------------------------------------------------------------

The following corollary is a consequence of Theorem \ref{cone} and \cite[Theorem 1]{karh}.
\begin{corollary}\label{conecor}
Let $n\geq 1$, $(X,x_0)$ be a pointed space, $C(X)$ be the cone over $X$ and $\hat{x}_t=(x_0,t)$, where $t\neq 1$. Then  $\Bbb{H}_n(C(X),\hat{x}_t)$ is trivial if one of the following conditions holds:\\
$(i)$ $(X,x_0)$ is $n$-locally simply connected and first countable.\\
$(ii)$ $(X,x_0)$ is semilocally strongly contractible.
\end{corollary}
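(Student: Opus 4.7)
The plan is to extract the corollary entirely from Theorem \ref{cone}, which already identifies $\mathcal{H}_n(C(X),\hat{x}_t)$ with the quotient $\mathcal{H}_n(X,x_0)/\prod^w_{i\in\Bbb{N}}\pi_n(X,x_0)$, where the denominator sits inside $\mathcal{H}_n(X,x_0)$ as the image of the monomorphism $\beta$ from Lemma \ref{weakdir}. Thus the whole task is to show that under either hypothesis (i) or (ii) this embedding $\beta$ is actually surjective, so that the quotient collapses to the trivial group.

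For case (i), I would invoke Theorem 1.1 of the introduction (Karimov--Repov\u{s}), which asserts that $\varphi:\mathcal{H}_n(X,x_0)\to\prod^w_{i\in\Bbb{N}}\pi_n(X,x_0)$ is an isomorphism whenever $(X,x_0)$ is $n$-locally simply connected and first countable. For case (ii), Theorem \ref{semistrnog} delivers exactly the same isomorphism $\varphi$ under the semilocally strongly contractible hypothesis. In both cases, a direct check from the definitions gives $\varphi\circ\beta=id$ on $\prod^w_{i\in\Bbb{N}}\pi_n(X,x_0)$: if $\beta([f_1],\ldots,[f_m],e,e,\ldots)=[f]$ with $f|_{S_k^n}=f_k$ for $k\leq m$ and $f|_{S_k^n}=c_{x_0}$ otherwise, then by definition $\varphi([f])=([f_1],\ldots,[f_m],e,e,\ldots)$. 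Hence $\beta$ is a one-sided inverse to the bijection $\varphi$, which forces $\beta$ to be bijective.

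Surjectivity of $\beta$ means precisely that the subgroup $\prod^w_{i\in\Bbb{N}}\pi_n(X,x_0)$ exhausts $\mathcal{H}_n(X,x_0)$, so the quotient in Theorem \ref{cone} is trivial, yielding $\mathcal{H}_n(C(X),\hat{x}_t)=0$ in both cases. I do not anticipate any real obstacle: all the substantive work is already packaged in Theorem \ref{cone} together with the two structural results being cited, and the corollary amounts to matching each hypothesis to the appropriate one of them. The only thing worth double-checking is that the subgroup of $\mathcal{H}_n(X,x_0)$ appearing as $\ker\mu$ in the proof of Theorem \ref{cone} coincides with $Im(\beta)$, but this is built into that proof (the elements of $\ker\mu$ are produced exactly from finite tuples of homotopy classes via the $\beta$-construction).
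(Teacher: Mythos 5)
Your proposal is correct and matches the paper's intended argument: the corollary is stated there precisely as a consequence of Theorem \ref{cone} combined with \cite[Theorem 1]{karh} for case $(i)$ and Theorem \ref{semistrnog} for case $(ii)$, the point in both cases being that $\varphi$ is an isomorphism onto $\prod^w_{i\in\Bbb{N}}\pi_n(X,x_0)$, so the subgroup being quotiented out is all of $\mathcal{H}_n(X,x_0)$. Your extra check that $\varphi\circ\beta=id$ (hence $\beta$ is surjective and $\ker\mu$ exhausts the group) is exactly the right way to make the deduction precise.
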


%-------------------------------------------------------------------------------------------------------------------------

\begin{example}\label{S}
Let $S={\widetilde{\bigvee}}_{n\in\Bbb{N}}S^n$ and $a$ be the common point, then by Corollary \ref{conecor}, $\Bbb{H}_n(CS,a)$ is trivial, for each natural number $n$.
\end{example}

%-------------------------------------------------------------------------------------------------------------------------

In \cite[Theorem 1]{karh} it is proved that for a first countable pointed space $(X,x_0)$ and $n\geq 1$, $\varphi:{\mathcal{H}}_n(X,x_0) \rightarrow {\prod}^w{\pi}_n(X,x_0)$
is an isomorphism if $X$ is $n$-locally simply connected at $x_0$. The following result gives us more information.
\begin{corollary}
Let $(X,x_0)$ be a first countable pointed space  and $n\geq 1$, then the following statements are equivalent.\\
$(i)$  $X$ is $n$-locally simply connected at $x_0$.\\
$(ii)$ $\varphi:{\mathcal{H}}_n(X,x_0) \rightarrow {\prod}^w{\pi}_n(X,x_0)$  is an isomorphism.\\
$(iii)$ ${\mathcal{H}}_n(C(X),\hat{x}_t)$ is trivial,
where $C(X)$ is the cone over $X$, $t\neq1$ and $\hat{x}_t=(x_0,t)$.
\end{corollary}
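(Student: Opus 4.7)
The plan is to prove the cycle $(i)\Rightarrow(ii)\Rightarrow(iii)\Rightarrow(i)$. The first implication is exactly Theorem 1.1 and requires no further work. The equivalence $(ii)\Leftrightarrow(iii)$ is a formality from Theorem \ref{cone}: in the proof of that theorem the kernel of $\mu=\mathcal{H}_n(i_t)$ is identified with the image of the monomorphism $\beta$ of Lemma \ref{weakdir}, and a direct check shows that $\varphi\circ\beta$ is the identity on ${\prod}^w_{i\in\Bbb{N}}\pi_n(X,x_0)$. Assuming $(ii)$, $\beta$ is the two-sided inverse of $\varphi$ and in particular surjective, so $\ker\mu=\mathrm{Im}(\beta)=\mathcal{H}_n(X,x_0)$; since $\mu$ is an epimorphism, its codomain is trivial, giving $(iii)$. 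Conversely, if $(iii)$ holds, Theorem \ref{cone} forces $\mathrm{Im}(\beta)=\mathcal{H}_n(X,x_0)$, making $\beta$ bijective with inverse $\varphi$, hence $(ii)$.

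The real content is in $(iii)\Rightarrow(i)$, which I would argue by contradiction. Suppose $X$ is not $n$-locally simply connected at $x_0$: there is a neighbourhood $U$ of $x_0$ such that for every neighbourhood $V\subseteq U$ of $x_0$, $\pi_n(V,x_0)\to\pi_n(U,x_0)$ is nontrivial. Using first countability, fix a nested countable local basis $\{U_k\}_{k\in\Bbb{N}}$ at $x_0$ with $U_k\subseteq U$, and for each $k$ choose $f_k:(S_k^n,a)\to(U_k,x_0)$ whose class in $\pi_n(U,x_0)$ is nontrivial. Then $\{f_k\}$ is null-convergent, so Lemma \ref{joinmap}(i) assembles it into a continuous map $f:(\Bbb{H}^n,\theta)\to(X,x_0)$. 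Composing with $i_t:X\to C(X)$ and applying $(iii)$ gives a based null-homotopy $H:\Bbb{H}^n\times I\to C(X)$ of $i_t\circ f$.

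Now choose an open interval $J\subseteq[0,1)$ about $t$ and put $W=U\times J$, an open neighbourhood of $\hat{x}_t$ in $C(X)$ that avoids the vertex. Since $H(\{\theta\}\times I)=\{\hat{x}_t\}\subseteq W$ and $I$ is compact, the tube lemma yields an open neighbourhood $U'$ of $\theta$ in $\Bbb{H}^n$ with $U'\times I\subseteq H^{-1}(W)$; by the topology of $\Bbb{H}^n$ at $\theta$, $S_k^n\subseteq U'$ for all $k$ past some $K$. For such $k$, $H|_{S_k^n\times I}$ is a based null-homotopy of $i_t\circ f_k$ inside $W$, and composing with the retraction $W=U\times J\to U\times\{t\}\cong U$ produces a based null-homotopy of $f_k$ in $U$, contradicting the choice of $f_k$. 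The crux is precisely this final upgrade: the contractibility of $C(X)$ makes the existence of $H$ automatic, but one needs the tube lemma together with first countability at $\theta$ and the fact that $t\in(0,1)$ (so that $W$ is a genuine product neighbourhood retracting onto $U$) to confine the homotopy to the prescribed neighbourhood $U$ of $x_0$.
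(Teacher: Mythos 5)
Your proof is correct, but the key implication $(iii)\Rightarrow(i)$ is handled by a genuinely different route than the paper's. The paper first applies \cite[Theorem 2]{karh} (countability --- in particular triviality --- of the Hawaiian group of a first countable space forces $n$-local simple connectedness) to the cone $C(X)$ at $\hat{x}_t$, and then transfers $n$-local simple connectedness from $C(X)$ down to $X$ by noting that a neighbourhood $U$ of $x_0$ witnessing failure of the property in $X$ would yield neighbourhoods $U\times J$ witnessing its failure in $C(X)$. You instead re-prove the needed special case of that external theorem by hand: you assemble non-null-homotopic spheres $f_k:(S_k^n,a)\rightarrow (U_k,x_0)$ into a single map of $\Bbb{H}^n$ via Lemma \ref{joinmap}, push it into the cone, and use the tube lemma (the same confinement argument that appears in the paper's proof of Theorem \ref{cone}) to force each $f_k$ with $k$ large to be null-homotopic inside $U$, a contradiction. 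Your version is longer but self-contained, avoiding the external citation; the paper's is shorter but leans on \cite[Theorem 2]{karh}. Two small remarks. First, the retraction of $W=U\times J$ onto $U\times\{t\}$ works equally well when $t=0$ (take $J=[0,\epsilon)$ relative to $[0,1)$), so your parenthetical restriction to $t\in(0,1)$ is unnecessary; the hypothesis is only $t\neq 1$. Second, your claim that $(ii)\Leftrightarrow(iii)$ is a ``formality'' rests on reading the subgroup ${\prod}^w_{i\in\Bbb{N}}{\pi}_n(X,x_0)$ of ${\mathcal{H}}_n(X,x_0)$ in Theorem \ref{cone} as $Im(\beta)$; the paper's proof of that theorem literally establishes $Im(\beta)\subseteq \ker\mu\subseteq \varphi^{-1}({\prod}^w{\pi}_n(X,x_0))$, and under the second reading the direction $(iii)\Rightarrow(ii)$ is not immediate (it would not give injectivity of $\varphi$). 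This does not affect your argument, since only $(ii)\Rightarrow(iii)$ is needed to close the cycle $(i)\Rightarrow(ii)\Rightarrow(iii)\Rightarrow(i)$, and that direction (surjectivity of $\beta$ forces $\ker\mu={\mathcal{H}}_n(X,x_0)$, hence a trivial quotient) is sound under either reading.
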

\begin{proof}
$(i)\Rightarrow (ii)$. If $X$ is $n$-locally simply connected at $x_0$, then by \cite[Theorem 1]{karh} $\varphi:{\mathcal{H}}_n(X,x_0) \rightarrow {\prod}^w{\pi}_n(X,x_0)$ is an isomorphism.\\
$(ii)\Rightarrow (iii)$. Let $\varphi:{\mathcal{H}}_n(X,x_0) \rightarrow {\prod}^w{\pi}_n(X,x_0)$ be an isomorphism, then by Theorem \ref{cone} the $n$-Hawaiian group of the cone over the space $X$ is trivial.\\
$(iii)\Rightarrow (i)$. Let  ${\mathcal{H}}_n(C(X),\hat{x}_t)$ be trivial, then  by \cite[Theorem 2]{karh} $C(X)$ is $n$-locally simply connected at $\hat{x}_t=(x_0,t)$ ($t\neq 1$) and we show that so is $(X,x_0)$. By contrary, suppose that there exists a neighborhood $U$ of $x_0$ such that for each open set $V$ containing $x_0$ the inclusion map $V\hookrightarrow U$ does not induce trivial homomorphism and so is $V\times J\hookrightarrow U\times J$, for every subinterval $J$ of the unit interval $I$. This contradicts to the $n$-locally simply connectedness of $C(X)$ at $\hat{x}_t=(x_0,t)$. Therefore $(X,x_0)$ is $n$\_locally simply connected.
\end{proof}

%------------------------------------------------------------------------------------------------------------------

\begin{corollary}
Let $(X,x_0)$ be a first countable pointed space, $t\neq1$ and $\hat{x}_t=(x_0,t)$, then
 ${\mathcal{H}}_n(C(X),\hat{x}_t)$ is trivial or is uncountable.
\end{corollary}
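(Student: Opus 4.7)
The plan is to use the preceding corollary together with Theorem 2 of \cite{karh}. I want to establish the contrapositive of ``${\mathcal{H}}_n(C(X),\hat{x}_t)$ is uncountable'' $\vee$ ``${\mathcal{H}}_n(C(X),\hat{x}_t)$ is trivial'', namely: if ${\mathcal{H}}_n(C(X),\hat{x}_t)$ is countable, then it is trivial.

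First I would check the hypothesis needed to apply \cite[Theorem 2]{karh} to the pointed space $(C(X),\hat{x}_t)$, which is first countability at $\hat{x}_t$. Since $t\neq 1$, the point $\hat{x}_t=(x_0,t)$ is distinct from the vertex, so a neighborhood basis at $\hat{x}_t$ in $C(X)$ is obtained from products of a countable local basis $\{V_k\}$ of $x_0$ in $X$ with a countable basis of open intervals around $t$ contained in $[0,1)$. Since $X$ is first countable at $x_0$ by hypothesis, this yields a countable neighborhood basis of $\hat{x}_t$ in $C(X)$.

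Next, assuming ${\mathcal{H}}_n(C(X),\hat{x}_t)$ is countable, I would apply \cite[Theorem 2]{karh} to conclude that $C(X)$ is $n$-locally simply connected at $\hat{x}_t$. Using exactly the argument given in the implication $(iii)\Rightarrow(i)$ of the previous corollary (the observation that if some neighborhood $U$ of $x_0$ witnessed the failure of $n$-local simple connectedness of $X$ at $x_0$, then $V\times J\hookrightarrow U\times J$ would induce a nontrivial homomorphism on $\pi_n$, contradicting $n$-local simple connectedness of $C(X)$ at $\hat{x}_t$), I transfer this property from $C(X)$ to $X$: the space $X$ is $n$-locally simply connected at $x_0$.

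Finally, invoking the implication $(i)\Rightarrow(iii)$ of the previous corollary (or equivalently Theorem \ref{cone} combined with \cite[Theorem 1]{karh}), I deduce that ${\mathcal{H}}_n(C(X),\hat{x}_t)$ is trivial. This completes the dichotomy. There is no real obstacle here: the argument is a direct chain of implications between the previous corollary, \cite[Theorem 2]{karh}, and the first countability of $C(X)$ at a non-vertex point; the only tiny verification is that first countability passes from $x_0\in X$ to $\hat{x}_t\in C(X)$ for $t\neq 1$.
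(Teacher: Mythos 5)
Your argument is correct and follows essentially the same route as the paper: assume countability, invoke \cite[Theorem 2]{karh} to get $n$-local simple connectedness of $C(X)$ at $\hat{x}_t$, transfer that property to $(X,x_0)$ as in the preceding corollary, and conclude triviality via the implication $(i)\Rightarrow(iii)$ there. Your explicit check that first countability passes from $(X,x_0)$ to $(C(X),\hat{x}_t)$ is a detail the paper leaves implicit but is a welcome addition.
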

\begin{proof}
Let  ${\mathcal{H}}_n(C(X),\hat{x}_t)$ be a countable set, then $C(X)$ is $n$-locally simply connected at $\hat{x}_t$ by \cite[Theorem 2]{karh}. So $X$ is $n$-locally simply connected at $x_0$. Hence $\varphi$ is an isomorphism and then ${\mathcal{H}}_n(C(X),\hat{x}_t)$ is trivial.
\end{proof}

%--------------------------------------------------------------------------------------------------------------------%

We need the following lemma to study the behavior of Hawaiian groups on locally trivial bundles.
\begin{lemma}\label{Homotopylift}
Let $p: E\rightarrow B$ be a locally trivial bundle, $y_0\in B$, $x_0\in p^{-1}(y_0)=F$ and  $(E,x_0)$ be first countable. Let  $\tilde{\alpha}:(\Bbb{H}^n,\theta)\rightarrow(E,x_0)$  lift ${\alpha}:(\Bbb{H}^n,\theta)\longrightarrow(B,y_0)$ and $F$ be a pointed homotopy with ${F}(-,0)={\alpha}$, then there exists a pointed  homotopy  $\tilde{F}$ that lifts  $F$ and so $\tilde{F}(-,0)=\tilde{\alpha}$.
\end{lemma}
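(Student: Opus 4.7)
The plan is to construct $\tilde{F}$ sphere by sphere, using a trivializing chart around $y_0$ to handle almost all spheres, and then to glue via Lemma \ref{joinmap}(ii). Three ingredients cooperate: the local triviality of $p$, the compactness of $I$, and the first countability of $E$ at $x_0$ combined with the continuity of $\tilde{\alpha}$ at $\theta$.

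Pick a trivializing neighbourhood $V$ of $y_0$ with chart $\phi\colon p^{-1}(V)\to V\times p^{-1}(y_0)$ sending $x_0$ to $(y_0,x_0)$. Continuity of $F$ at each $(\theta,t)$ together with compactness of $I$ yields a neighbourhood $N$ of $\theta$ with $F(N\times I)\subseteq V$; shrinking $N$ by continuity of $\tilde{\alpha}$ gives $K\in\Bbb{N}$ such that for all $k\geq K$, $S_k^n\subseteq N$ and $\tilde{\alpha}(S_k^n)\subseteq p^{-1}(V)$. For such $k$, write $\phi\circ\tilde{\alpha}|_{S_k^n}(r)=(\alpha(r),g_k(r))$ with $g_k\colon(S_k^n,a)\to(p^{-1}(y_0),x_0)$ continuous, and set
\[
\tilde{F}_k(r,t)=\phi^{-1}\bigl(F(r,t),g_k(r)\bigr),
\]
which lifts $F|_{S_k^n\times I}$, extends $\tilde{\alpha}|_{S_k^n}$ at $t=0$, and sends $(a,t)$ to $x_0$. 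For the finitely many indices $k<K$, the classical homotopy lifting property of a locally trivial bundle with respect to the CW pair $(S_k^n,\{a\})$, applied to the constant lift $x_0$ on $\{a\}\times I$, produces a continuous pointed lift $\tilde{F}_k$ with the same three properties. Define $\tilde{F}\colon\Bbb{H}^n\times I\to E$ by $\tilde{F}|_{S_k^n\times I}=\tilde{F}_k$ and $\tilde{F}(\theta,t)=x_0$; by construction $p\circ\tilde{F}=F$, $\tilde{F}(-,0)=\tilde{\alpha}$, and $\tilde{F}$ is pointed.

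The main obstacle is continuity of $\tilde{F}$ at the points $(\theta,t)$, which reduces to null-convergence of the sequence of lifts $\{\tilde{F}_k\}$. Given a neighbourhood of $x_0$ in $E$, first countability allows us to shrink it to a basic set $W=\phi^{-1}(V'\times F'')$, with $V'\subseteq V$ a neighbourhood of $y_0$ and $F''\subseteq p^{-1}(y_0)$ a neighbourhood of $x_0$. Continuity of $F$ at $\{\theta\}\times I$ gives an index beyond which $F(S_k^n\times I)\subseteq V'$, while continuity of $\tilde{\alpha}$ at $\theta$ forces $g_k(S_k^n)\subseteq F''$ eventually; hence $\tilde{F}_k(S_k^n\times I)\subseteq W$ for all but finitely many $k$. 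This is exactly the null-convergence needed so that a direct analogue of Lemma \ref{joinmap}(ii) (with an extra $I$-factor in the domain) yields continuity of $\tilde{F}$, completing the construction.
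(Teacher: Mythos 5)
Your proof is correct, and its skeleton matches the paper's: finitely many initial spheres are lifted by the classical relative homotopy lifting property, the tail of the sequence is lifted so as to stay near $x_0$, and a gluing via Lemma \ref{joinmap}(ii) plus a null-convergence check finishes the argument. Where you genuinely diverge is in the treatment of the tail. The paper fixes a countable basis $\{U_i\}$ at $x_0$, sets $V_i=p(U_i)$, and for $K_i\le k<K_{i+1}$ invokes the homotopy lifting property of the restricted map $p|_{U_i}\colon U_i\to V_i$, which it asserts is again a locally trivial bundle; that is the delicate point of the paper's argument, since an arbitrary basic open set in the total space need not restrict to a bundle over its image, and one must also arrange that the initial lift $\tilde{\alpha}|_{S_k^n}$ already lies in $U_i$. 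You instead work inside a single trivializing chart $\phi$ over $V$ and write the tail lifts explicitly as $\tilde{F}_k(r,t)=\phi^{-1}\bigl(F(r,t),g_k(r)\bigr)$, transporting the fiber coordinate of $\tilde{\alpha}$ constantly in $t$. This buys you two things: the image estimate $\tilde{F}_k(S_k^n\times I)\subseteq\phi^{-1}(V'\times F'')$ needed for null-convergence falls out of the formula together with continuity of $F$ and of $\tilde{\alpha}$, with no appeal to lifting properties of restricted maps; and the first countability of $E$ becomes inessential, because the sets $\phi^{-1}(V'\times F'')$ already form a neighbourhood basis at $x_0$ by the product topology of the chart. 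Your attribution of that shrinking step to first countability is the one cosmetic slip --- it is the chart, not countability, that supplies the basic sets --- whereas the paper really does lean on the countable nested basis to organize its increasing sequence $K_1<K_2<\cdots$.
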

\begin{proof}
Let  $\{U_i\}$ be a countable basis at $x_0$ in $E$. Since the projection map is open, $\{V_i=p(U_i)\}$ is a countable collection of open sets which is a local basis for $B$ at $y_0$. Similar to the proof of Theorem \ref{cone}, one can see that there exists an increasing sequence $K_1,K_2,K_3,...$ such that for $k\geq K_i$, $Im(F{\mid}_{S_k^n\times I})\subseteq V_i$. Homotopy lifting property of locally trivial bundles which holds on pointed homotopies implies that for each $k<K_1$, we can  define ${\tilde{F}}_k:S_k^n\times I\longrightarrow E$ to be the lifting  of $F{\mid}_{S_k^n\times I}$ such that ${\tilde{F}}_k(-, 0)=\tilde{\alpha}{\mid}_{S_k^n}$. The map $p{\mid}_{U_i}:U_i\longrightarrow V_i$ is again a locally trivial bundle and if $K_i\leq k<K_{i+1}$, then we have $Im(F{\mid}_{S_k^n\times I})\subseteq V_i$. So for each pointed homotopy $F{\mid}_{S_k^n\times I}$ there exists a lifting ${\tilde{F}}_k$ such that ${\tilde{F}}_k(-, 0)=\tilde{\alpha}{\mid}_{S_k^n}$ and $Im({\tilde{F}}_k)\subseteq U_i$. We define $\tilde{F}{\mid}_{S_k^n\times I}={\tilde{F}}_k$ which is continuous by Lemma \ref{joinmap} and it is a lifting for $F$ such that $\tilde{F}(-,0)=\alpha$. Since all $\tilde{F}{\mid}_{S_k^n\times I}$ may be chosen pointed  homotopies with $\tilde{F}{\mid}_{S_k^n\times I}(-,0)=\tilde{\alpha}{\mid}_{S_k^n\times I}$, then so is $\tilde{F}$.
\end{proof}

%--------------------------------------------------------------------------------------------------------------------%

It is known that for each locally trivial bundle there exists an exact sequence of homotopy groups as follows
  $$...\rightarrow  {\pi}_{n}(F,y_0)\rightarrow {\pi}_n(E,y_0)\rightarrow{\pi}_n(B,x_0)\rightarrow {\pi}_{n-1}(F,y_0)\rightarrow$$ $$ \rightarrow {\pi}_{n-1}(E,y_0) \rightarrow ... \rightarrow {\pi}_{1}(F,y_0)\rightarrow {\pi}_{1}(E,y_0)\rightarrow {\pi}_{1}(B,y_0).$$
In the following theorem we gives a similar exact sequence of Hawaiian groups with some conditions.
\begin{theorem}
Let $p: E\rightarrow B$ be a locally trivial bundle and $E$ be first countable at $x_0\in p^{-1}(y_0)=F$, where $y_0\in B$. Then there exists an exact sequence of Hawaiian groups as follows
$$...\rightarrow  {\mathcal{H}}_{n}(F,y_0)\rightarrow {\mathcal{H}}_n(E,y_0)\rightarrow{\mathcal{H}}_n(B,x_0)\rightarrow {\mathcal{H}}_{n-1}(F,y_0)  \rightarrow $$ $$\rightarrow {\mathcal{H}}_{n-1}(E,y_0) \rightarrow...\rightarrow {\mathcal{H}}_{1}(F,y_0)\rightarrow {\mathcal{H}}_{1}(E,y_0)\rightarrow {\mathcal{H}}_{1}(B,y_0).$$
\end{theorem}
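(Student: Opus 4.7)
The plan is to imitate the classical derivation of the long exact sequence of a fibration, using Lemma~\ref{Homotopylift} as a substitute for the usual homotopy lifting property and Lemma~\ref{joinmap} to glue sphere-by-sphere data into continuous maps from the Hawaiian earring. The first two homomorphisms in each triple, $i_\ast := \mathcal{H}_n(i)$ and $p_\ast := \mathcal{H}_n(p)$, come for free from the functoriality of the Hawaiian group. The substantive construction is the connecting map $\partial : \mathcal{H}_n(B,y_0) \to \mathcal{H}_{n-1}(F,x_0)$.

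To define $\partial$, I would first introduce a continuous map $\sigma : (\Bbb{H}^{n-1},\theta) \times I \to (\Bbb{H}^n,\theta)$ whose restriction to each $S_k^{n-1}\times I$ is the reduced suspension quotient onto $S_k^n$, collapsing $S_k^{n-1}\times\{0,1\} \cup \{\theta\}\times I$ to $\theta$. Continuity at the basepoint uses the weak join topology: any neighborhood $U$ of $\theta$ in $\Bbb{H}^n$ contains $\widetilde{\bigvee}_{k\geq K}S_k^n$ for some $K$, and then $V := \widetilde{\bigvee}_{k\geq K} S_k^{n-1}$ is a neighborhood of $\theta$ in $\Bbb{H}^{n-1}$ with $\sigma(V\times I)\subseteq U$. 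Given $[f] \in \mathcal{H}_n(B,y_0)$, the composite $H := f\circ\sigma : \Bbb{H}^{n-1}\times I \to B$ is a pointed homotopy with $H(-,0) = H(-,1) \equiv y_0$. Starting from the constant lift $c_{x_0}$ of $H(-,0)$, Lemma~\ref{Homotopylift} yields a pointed lift $\tilde H : \Bbb{H}^{n-1}\times I \to E$, and $g := \tilde H(-,1)$ satisfies $p\circ g \equiv y_0$, so it lands in the fibre $F$; set $\partial[f] := [g]$. Independence of the choice of $\tilde H$ follows from a second application of Lemma~\ref{Homotopylift} to interpolate two lifts; independence of the representative $f$ follows by lifting a pointed homotopy between representatives composed with $\sigma\times\mathrm{id}_I$. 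That $\partial$ is a homomorphism is then straightforward because $\sigma$ transports the natural concatenation on $\Bbb{H}^{n-1}$ to concatenation on $\Bbb{H}^n$ one sphere at a time.

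For exactness, the three composites vanish almost by inspection: $p\circ i$ is constant; for $[f]\in \mathcal{H}_n(E,x_0)$ the map $f\circ\sigma$ is itself a tautological lift of $(p\circ f)\circ\sigma$ with $(f\circ\sigma)(-,1) \equiv x_0$, so $\partial\circ p_\ast = 0$; and $g = \tilde H(-,1)$ is nullhomotopic in $E$ via $\tilde H$, giving $i_\ast\circ \partial = 0$. The three reverse containments are the standard three-step argument. If $p_\ast[f]=0$, use Lemma~\ref{Homotopylift} to lift a nullhomotopy of $p\circ f$ starting from $f$, producing $f\simeq f'$ with $f'$ factoring through $F$. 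If $\partial[f]=0$, a nullhomotopy of $g$ in $F$ can be used to modify $\tilde H$ so that $\tilde H(-,1)\equiv x_0$; the adjusted $\tilde H$ then descends through the quotient $\sigma$ to produce $\tilde f:\Bbb{H}^n\to E$ with $p\circ\tilde f = f$. If $i_\ast[g]=0$, a nullhomotopy $G:\Bbb{H}^{n-1}\times I\to E$ of $g$ has $p\circ G$ constant on $\Bbb{H}^{n-1}\times\{0,1\}\cup\{\theta\}\times I$, so $p\circ G$ factors through $\sigma$ as a map $f : \Bbb{H}^n\to B$, and by construction $\partial[f]=[g]$.

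The main obstacle throughout is ensuring that the sphere-wise lifts and homotopies produced at each step assemble into continuous maps from $\Bbb{H}^n$ or $\Bbb{H}^{n-1}$, which amounts to verifying null-convergence of the relevant families. This is exactly what first-countability at $x_0$ buys in Lemma~\ref{Homotopylift}, where an increasing sequence $\{K_i\}$ controls the image of $\tilde H$ on $S_k^n$ for $k\geq K_i$. In each of the exactness arguments one must carry through this kind of control, using Lemma~\ref{joinmap}(i) and (iii) to reassemble the pieces; once the sphere-wise constructions are set up with due attention to basepoints, null-convergence propagates naturally along the argument.
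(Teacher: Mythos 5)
Your proposal is correct and follows essentially the same route as the paper: the first two maps come from functoriality, and the connecting homomorphism is built by viewing each $S_k^n$ as a suspension of $S_k^{n-1}$, lifting the resulting pointed homotopy via Lemma~\ref{Homotopylift}, reading off the end of the lift in the fibre, and invoking the classical fibration argument for exactness (the paper cites Luke--Mishchenko for this last step, which you instead sketch directly). Your only packaging difference is assembling the suspensions into one global map $\sigma:\Bbb{H}^{n-1}\times I\to\Bbb{H}^n$ and applying Lemma~\ref{Homotopylift} once, rather than lifting sphere-by-sphere with the $K_i$ control as the paper does; just note that your neighborhood $V$ of $\theta$ must also include small pieces of $S_k^{n-1}$ for $k<K$ (via the tube lemma) to actually be open in $\Bbb{H}^{n-1}$.
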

\begin{proof}
The inclusion  map $j:F\rightarrow E$ induces the homomorphism ${\mathcal{H}}_{n}(F,y_0)\rightarrow {\mathcal{H}}_n(E,y_0)$ and the projection $p:E\rightarrow B$ induces the homomorphism ${\mathcal{H}}_n(E,y_0)\rightarrow{\mathcal{H}}_n(B,x_0)$. Let $n\geq 2$, then we define  $\partial: {\mathcal{H}}_n(B,x_0)\rightarrow {\mathcal{H}}_{n-1}(F,y_0)$ and we prove that the sequence is exact.  Let $\alpha:(\Bbb{H}^n,\theta)\rightarrow(B,x_0)$ be an arbitrary continuous map, there exists an increasing sequence of natural numbers $K_1,K_2,K_3,...$ such that for each $k\geq K_i$, $Im (\alpha{\mid}_{S_k^n})\subseteq V_{i}$. Consider ${\alpha}_k=\alpha{\mid}_{S_k^n}$ and $S^n_k=\Sigma S^{n-1}_k$. Applying the Homotopy Lifting Theorem for pointed homotopies and $S^{n-1}_k$, we can deduce that there exists a lifting for ${\alpha}_k$. Similar to the proof of Theorem \ref{Homotopylift}, we construct a lifting $\tilde{\alpha}$ for $\alpha$ such that if $K_i\leq k<K_{i+1}$, then $\tilde{\alpha}{\mid}_{S_k^{n-1}}(-,1):S_k^{n-1}\rightarrow F\cap U_i$. Put $\bar{\alpha}:\Bbb{H}^{(n-1)}\rightarrow F$ by $\bar{\alpha}{\mid}_{S_k^{n-1}}=\tilde{\alpha}{\mid}_{S_k^{n-1}}(-,1)$ which is continuous by Lemma \ref{joinmap}. Define $\partial([\alpha])=\bar{\alpha}$, then similar to the proof of \cite[Theorem 15]{luke}, one can verifies that the sequence is exact.
\end{proof}

The following corollary is an immediate consequence of the above theorem.
%------------------------------------------------------------------------------------------------------------------
\begin{corollary}
Let  $p:\tilde{X}\rightarrow X$ be a covering, $\tilde{X}$ be first countable at $\tilde{x}_0$ and $n\geq 2$. Then
${\mathcal{H}}_{n}(\tilde{X},\tilde{x}_0)\cong {\mathcal{H}}_{n}(X,x_0)$, where $\tilde{x}_0\in p^{-1}(x_0)$. Moreover,
$p_{\ast}:{\mathcal{H}}_{1}(\tilde{X},\tilde{x}_0)\rightarrow{\mathcal{H}}_{1}(X,x_0)$ is a monomorphism.
\end{corollary}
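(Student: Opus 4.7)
The plan is to derive this corollary as a direct application of the preceding exact-sequence theorem, once the fiber is identified as having trivial Hawaiian groups. A covering map $p:\tilde{X}\to X$ is a locally trivial bundle whose fiber $F=p^{-1}(x_0)$ is discrete, and $\tilde{X}$ is first countable at $\tilde{x}_0$ by hypothesis. So I can feed $p$ into the exact sequence
$$\cdots\to \mathcal{H}_n(F,\tilde{x}_0)\to \mathcal{H}_n(\tilde{X},\tilde{x}_0)\to \mathcal{H}_n(X,x_0)\to \mathcal{H}_{n-1}(F,\tilde{x}_0)\to\cdots$$
established just above. The task then reduces to computing $\mathcal{H}_k(F,\tilde{x}_0)$ for all $k\geq 1$.

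The key observation is that, because $F$ is discrete, every continuous map $f:(\Bbb{H}^k,\theta)\to(F,\tilde{x}_0)$ is necessarily constant: the Hawaiian earring $\Bbb{H}^k$ is path-connected, so its image lies in the path-component $\{\tilde{x}_0\}$ of $F$. Consequently $\mathcal{H}_k(F,\tilde{x}_0)$ is trivial for every $k\geq 1$. This is the one step that does any real work, and it is essentially immediate from connectedness of $\Bbb{H}^k$.

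Now I read off the conclusions from the exact sequence. For $n\geq 2$, the pieces $\mathcal{H}_n(F,\tilde{x}_0)$ and $\mathcal{H}_{n-1}(F,\tilde{x}_0)$ both vanish, so exactness at $\mathcal{H}_n(\tilde{X},\tilde{x}_0)$ forces $p_\ast$ to be injective, while exactness at $\mathcal{H}_n(X,x_0)$ forces it to be surjective; hence $p_\ast:\mathcal{H}_n(\tilde{X},\tilde{x}_0)\to\mathcal{H}_n(X,x_0)$ is an isomorphism. For $n=1$, only $\mathcal{H}_1(F,\tilde{x}_0)=0$ is available (the sequence terminates at $\mathcal{H}_1(B,y_0)$), and exactness at $\mathcal{H}_1(\tilde{X},\tilde{x}_0)$ shows that $p_\ast$ has trivial kernel, giving the monomorphism claim.

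I do not anticipate a serious obstacle: the preceding theorem has already done the hard lifting-and-exactness work (via the homotopy lifting Lemma \ref{Homotopylift}), and the corollary is just the specialization to discrete fibers. The only place to be slightly careful is to ensure the exact sequence genuinely applies with the basepoint $\tilde{x}_0\in F$ playing the role of both the distinguished point of the total space and of the fiber, which is exactly the setup in the theorem.
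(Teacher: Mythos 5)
Your proof is correct and follows exactly the route the paper intends: the paper derives this corollary as an immediate consequence of the exact sequence for locally trivial bundles, which is precisely your argument, with the (easy but necessary) observation that a discrete fiber has trivial Hawaiian groups because $\Bbb{H}^k$ is connected. No issues.
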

%--------------------------------------------------------------------------------------------------------------------%

\begin{theorem}
Let $p:E\rightarrow B$ be a locally trivial bundle, $y_0\in B$ and $x_0\in p^{-1}(y_0)=F$. If ${\pi}_n(E,x_0)$ and
${\pi}_n(B,y_0)$ are trivial, then $${\mathcal{H}}_n(E,x_0)\cong {\mathcal{H}}_n(B,y_0)\times {\mathcal{H}}_n(F,x_0).$$
\end{theorem}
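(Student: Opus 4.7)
The plan is to apply the long exact sequence of Hawaiian groups from the preceding theorem, show that the two connecting homomorphisms adjacent to $\mathcal{H}_n(E,x_0)$ vanish under the given hypotheses, and then split the resulting short exact sequence via a lifting construction. I would first extract the five-term segment
$$\mathcal{H}_{n+1}(B,y_0)\xrightarrow{\partial_{n+1}}\mathcal{H}_n(F,x_0)\xrightarrow{j_*}\mathcal{H}_n(E,x_0)\xrightarrow{p_*}\mathcal{H}_n(B,y_0)\xrightarrow{\partial_n}\mathcal{H}_{n-1}(F,x_0)$$
from the exact sequence and analyse the two boundary maps visible here.

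For the vanishing of $\partial_n$, recall from the construction in the previous proof that $\partial_n[\alpha]$ is represented by $\bar\alpha:\mathbb{H}^{n-1}\to F$ whose restriction to $S^{n-1}_k$ realises the classical connecting image of $[\alpha|_{S^n_k}]\in\pi_n(B,y_0)$. Since $\pi_n(B,y_0)=0$, each $[\alpha|_{S^n_k}]$ is trivial, so each $\bar\alpha|_{S^{n-1}_k}$ is null-homotopic in $F$; using the nested trivialising local basis $\{U_i\}$ of $E$ at $x_0$ that enters the construction of $\partial_n$, together with the local trivialisation $p|_{U_i}:U_i\to V_i$, I would refine the null-homotopies to lie inside $U_i\cap F$, producing a null-convergent family which Lemma \ref{joinmap}(ii) assembles into a null-homotopy of $\bar\alpha$. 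A parallel argument using $\pi_n(E,x_0)=0$ — producing null-homotopies of $\bar\beta|_{S^n_k}$ inside $U_i\cap F$ by first contracting inside $U_i$ and then pushing the contractions into the fibre via the local product structure — gives $\partial_{n+1}=0$, collapsing the segment to the short exact sequence
$$0\longrightarrow\mathcal{H}_n(F,x_0)\xrightarrow{j_*}\mathcal{H}_n(E,x_0)\xrightarrow{p_*}\mathcal{H}_n(B,y_0)\longrightarrow 0.$$

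To split this sequence I would construct a section $s:\mathcal{H}_n(B,y_0)\to\mathcal{H}_n(E,x_0)$ by lifting: for $g:\mathbb{H}^n\to B$ the hypothesis $\pi_n(B,y_0)=0$ gives a null-homotopy of each $g|_{S^n_k}$ in $B$, which by the Homotopy Lifting Property (Lemma \ref{Homotopylift}) lifts to $E$ and whose endpoint supplies a lift $\tilde g_k:S^n_k\to E$ of $g|_{S^n_k}$; arranging the lifts inside the nested $U_i$ makes $\{\tilde g_k\}$ null-convergent, so Lemma \ref{joinmap}(i) assembles a continuous $\tilde g:\mathbb{H}^n\to E$ and $s([g])=[\tilde g]$ is the desired section. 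Since $\mathcal{H}_n$ is abelian for $n\geq 2$ by Theorem \ref{abel}, the split short exact sequence produces the asserted direct-product decomposition. The principal obstacle throughout is the null-convergence issue: the bare $\pi_n$-hypotheses only furnish sphere-by-sphere null-homotopies and lifts, and promoting these to null-convergent families so that Lemma \ref{joinmap} applies is the key technical point, handled in every step by systematically confining the constructions to the nested trivialising neighbourhoods supplied by first-countability of $E$ at $x_0$ together with the local triviality of $p$.
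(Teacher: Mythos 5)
Your route is genuinely different from the paper's: the paper never invokes the exact sequence here, but instead directly defines a map $\psi:\mathcal{H}_n(E,x_0)\to\mathcal{H}_n(B,y_0)\times\mathcal{H}_n(F,x_0)$ by pushing a tail $f\circ R_K$ of $f$ (the part landing in a trivializing neighbourhood $U\cong V\times F$ of $x_0$) through the trivialization $\sigma_0$ and the two projections, using $\pi_n(E,x_0)=0$ only to make this independent of $K$ and $\pi_n(B,y_0)=0$ only for surjectivity. Note also that the theorem as stated carries no first-countability hypothesis on $E$, so you are not strictly entitled to the exact sequence you start from.

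The fatal step, however, is the claim that $\partial_{n+1}=0$. Exactness at $\mathcal{H}_n(F,x_0)$ forces $\mathrm{Im}(\partial_{n+1})=\ker\bigl(j_*:\mathcal{H}_n(F,x_0)\to\mathcal{H}_n(E,x_0)\bigr)$, and the hypothesis $\pi_n(E,x_0)=0$ works against you here: it tends to kill $\mathcal{H}_n(E,x_0)$, hence to make $j_*$ have a large kernel and $\partial_{n+1}$ a large image. Concretely, for the Hopf bundle $S^1\hookrightarrow S^3\to S^2$ with $n=1$ all hypotheses hold, yet $\mathcal{H}_1(S^1,\ast)\cong\prod^w_{i\in\mathbb{N}}\mathbb{Z}$ cannot inject into $\mathcal{H}_1(S^3,\ast)=0$, so your short exact sequence $0\to\mathcal{H}_n(F)\to\mathcal{H}_n(E)\to\mathcal{H}_n(B)\to 0$ is impossible. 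The precise place your argument for $\partial_{n+1}=0$ breaks is the phrase ``first contracting inside $U_i$'': the hypothesis $\pi_n(E,x_0)=0$ only supplies a contraction of $\bar\beta|_{S^n_k}$ somewhere in $E$, and nothing confines that contraction to $U_i$, so it cannot be pushed into the fibre via the local product structure. The same Hopf computation in fact contradicts the conclusion of the theorem itself ($0\not\cong 0\times\prod^w_{i\in\mathbb{N}}\mathbb{Z}$), which is consistent with the observation that the paper's own $\psi$ is verified to be independent of $K$ but never verified to be independent of the chosen representative of $[f]$; so you should not expect a repair of your argument along these lines.
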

\begin{proof}
Let $[f]\in{\mathcal{H}}_n(E,x_0)$ and ${\sigma}_{0}:U\rightarrow V\times F$ be the local trivialization at $x_0$. There
exists $K\in\Bbb{N}$ such that if $k\geq K$, then $Im(f{\mid}_{S_k^n})\subseteq U$. Consider $p_1:B\times F\rightarrow B$,
$p_2:B\times F\rightarrow F$  as the projection maps on first and second components, respectively, $i:V\rightarrow B$ as
the inclusion map and $R_K:{\Bbb{H}}^n\rightarrow {\Bbb{H}}^n$ as the natural contraction on ${\tilde{\bigvee}}_{k\geq K}
S_k^n$.
 We define $\psi:{\mathcal{H}}_n(E,x_0)\rightarrow {\mathcal{H}}_n(B,y_0)\times {\mathcal{H}}_n(F,x_0)$ by $\psi([f])=
 ([p_1\circ i\circ{\sigma}_{0}\circ f\circ R_K],[p_2\circ{\sigma}_{0}\circ f\circ R_K])$. We first show that $\psi $ is
 independent of the choice of $K$. To prove this, let $K_1,K_2$ be two natural numbers such that $Im(f
 {\mid}_{{\tilde{\bigvee}}_{k\geq K_1}S_k^n})\subseteq U$ and
 $Im(f{\mid}_{{\tilde{\bigvee}}_{k\geq K_2}S_k^n})\subseteq U$. We prove  that $[f\circ R_{K_1}]=[f\circ R_{K_2}]$.
 Without loss of generality, let $K_1< K_2$. Since ${\pi}_n(E,x_0)$ is trivial, $f{\mid}_{S_k^n}$ is null-homotopic relative to $\{\theta\}$,
 for each $k\in \Bbb{N}$, in particular $K_1\leq k<K_2$. By Lemma \ref{joinmap} $(iii)$ we
  have $f\circ R_{K_1}\cong f\circ R_{K_2}$ $rel\ \{\theta\}$, so
   $p_1\circ i\circ {\sigma}_{0}\circ f\circ R_{K_1}\cong p_1\circ i\circ{\sigma}_{0}\circ f\circ R_{K_2}$ and
   $p_2\circ {\sigma}_{0}\circ f\circ R_{K_1}\cong p_2\circ {\sigma}_{0}\circ f\circ R_{K_2}$ $rel\ \{\theta\}$.
   Thus $\psi([f])$ is unique and hence $\psi$ is well defined.

We show that $\psi$ is a homomorphism. Let $[f],[g]\in{\mathcal{H}}_n(E,x_0)$ and for $K$ we have
$Im(f{\mid}_{{\tilde{\bigvee}}_{k\geq K}S_k^n})\bigcup Im(g{\mid}_{{\tilde{\bigvee}}_{k\geq K}S_k^n})\subseteq U$. Then
$$\psi([f\ast g])=([p_1\circ i\circ{\sigma}_{0}\circ(f\ast g)\circ R_K],[p_2\circ{\sigma}_{0}\circ(f\ast g)\circ R_K])$$
$$=([p_1\circ i\circ {\sigma}_{0}\circ(f\circ R_K\ast g\circ R_K)],[p_2\circ{\sigma}_{0}\circ(f\circ R_K\ast g\circ R_K)])$$
$$=([p_1\circ i\circ {\sigma}_{0}\circ f\circ R_K]\ast[p_1\circ i\circ{\sigma}_{0}\circ g\circ R_K],
[p_2\circ{\sigma}_{0}\circ f\circ R_K]\ast[p_2\circ {\sigma}_{0}\circ g\circ R_K])$$ $$=\psi([f])\ast\psi([g]).$$

We show that $\psi$ is an epimorphism. Let $([h_1],[h_2])\in{\mathcal{H}}_n(B,y_0)\times {\mathcal{H}}_n(F,x_0)$, there
exists $K\in\Bbb{N}$ such that $Im(h_1{\mid}_{{\tilde{\bigvee}}_{k\geq K}S_k^n})\subseteq V$. Let
$h:\Bbb{H}^n\rightarrow V\times F$ be the unique map  with $p_1\circ h=h_1\circ R_K$ and $p_2\circ h=h_2$. Since
${\pi}_n(B,y_0)$ is trivial, $h_1\circ R_K\cong h_1$. We have $Im({\sigma}_{0}^{-1}\circ h)\subseteq U$, so
$\psi([{\sigma}_{0}^{-1}\circ h])=([p_1\circ i\circ {\sigma}_{0}\circ({\sigma}_{0}^{-1}\circ h)],[p_2\circ {\sigma}_{0}
\circ({\sigma}_{0}^{-1}\circ h)])=([h_1],[h_2])$.

Finally, since $({p_1}_{\ast},{p_2}_{\ast})$ and ${{\sigma}_{0}}_{\ast}$ are isomorphisms, we can easily see that
$\psi$ is injective and hence it is an isomorphism.
\end{proof}
%%////////////////////////////////////////////////////////////////////////////////////////////////////////////////////////////////////
%%====================================================================================================================================
%%\\\\\\\\\\\\\\\\\\\\\\\\\\\\\\\\\\\\\\\\\\\\\\\\\\\\\\\\\\\\\\\\\\\\\\\\\\\\\\\\\\\\\\\\\\\\\\\\\\\\\\\\\\\\\\\\\\\\\\\\\\\\\\\\\\\\

\section{Infinite Dimensional Hawaiian Groups}
\begin{theorem}\label{lsc}
Let $(X,x_0)$ be a first countable, locally strongly contractible pointed space, then the following isomorphism holds
$${\mathcal{H}}_{\infty}(X,x_0)\cong{\prod}_{n\in\Bbb{N}}^w{\prod}_{k\in\Bbb{N}}^w{\pi}_n(X,x_0).$$
\end{theorem}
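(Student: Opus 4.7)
My plan is to mimic Theorem \ref{main}, adding an extra layer of indexing coming from the weak-join decomposition ${\Bbb{H}}^\infty = {\widetilde{\bigvee}}_{n\in{\Bbb{N}}} {\Bbb{H}}^n$. Define
$$\psi: {\mathcal{H}}_\infty(X, x_0) \longrightarrow {\prod}_{n\in{\Bbb{N}}}^w {\prod}_{k\in{\Bbb{N}}}^w \pi_n(X, x_0)$$
by $\psi([f]) = \{[f{\mid}_{S^n_k}]\}_{n,k}$, where $S^n_k$ denotes the $k$th $n$-sphere of the $\Bbb{H}^n$-summand inside $\Bbb{H}^\infty$. Well-definedness on pointed homotopy classes and the homomorphism property follow by restricting pointed homotopies and the concatenation operation to each sphere, in the spirit of Lemma \ref{joinmap}.

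To see that $\psi$ lands in the weak-weak product, use first countability to pick a nested local basis $\{V_j\}_{j\in\Bbb{N}}$ at $x_0$, and use local strong contractibility to produce pointed null-homotopies $h_j$ of the inclusions $V_{j+1} \hookrightarrow V_j$ to $x_0$. Continuity of $f$ at $\theta$ together with the weak-join topology on $\Bbb{H}^\infty$ yields an $N$ with $Im(f{\mid}_{\Bbb{H}^n}) \subseteq V_1$ for all $n \geq N$; composition with $h_0$ then null-homotopes $f{\mid}_{\Bbb{H}^n}$ as a whole, so $[f{\mid}_{S^n_k}] = 0$ for all $n \geq N$. For each of the finitely many $n < N$, continuity of $f{\mid}_{\Bbb{H}^n}$ and the same argument yield a $K_n$ such that $[f{\mid}_{S^n_k}] = 0$ whenever $k \geq K_n$. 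Surjectivity is straightforward: any target element has only finitely many non-trivial coordinates $[g_{n,k}]$; define $f$ to be a chosen representative on each such sphere and constant elsewhere, which is continuous because it is constant outside a finite union of spheres.

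The main obstacle is injectivity. Given $[f]$ with $\psi([f]) = 0$, every $f{\mid}_{S^n_k}$ admits a pointed null-homotopy $F_{n,k}$, but assembling them into a single continuous pointed null-homotopy on $\Bbb{H}^\infty \times I$ requires the doubly-indexed family $\{F_{n,k}\}$ to be null-convergent in both indices; a priori the images of the $F_{n,k}$ need not shrink toward $x_0$. I would follow the shrinking trick used in Theorem \ref{main}: choose integers $N_j$ and $K_{n,j}$ so that $Im(f{\mid}_{\Bbb{H}^n}) \subseteq V_j$ for $n \geq N_j$ and $Im(f{\mid}_{S^n_k}) \subseteq V_j$ for $k \geq K_{n,j}$. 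On spheres with $(n,k)$ deep inside the $j$th tail, replace $F_{n,k}$ by the homotopy obtained from the contracting homotopy $h_{j-1}$ applied to $f{\mid}_{S^n_k}$, whose image stays inside $V_{j-1}$; on the finitely many remaining spheres outside every such tail, retain the original $F_{n,k}$. By construction, the modified family shrinks through the nested basis in both the $n$- and $k$-directions simultaneously, so by the infinite-dimensional analog of Lemma \ref{joinmap}(ii) they glue to a continuous pointed null-homotopy of $f$, proving $\psi$ is injective.
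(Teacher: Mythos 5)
Your proof is correct and follows essentially the same route as the paper: restrict to the individual spheres $S^n_k$, identify the image with ${\prod}_{n\in\Bbb{N}}^w{\prod}_{k\in\Bbb{N}}^w{\pi}_n(X,x_0)$ using first countability and local strong contractibility, and prove injectivity by replacing the given null-homotopies with ones built from the local contractions so that the resulting doubly indexed family is null-convergent and glues. The only difference is organizational: the paper factors the restriction map through ${\prod}_{n\in\Bbb{N}}^w{\mathcal{H}}_n(X,x_0)$ and appeals to Theorem 1 of \cite{karh} for each layer, whereas you run the doubly indexed version of that argument in a single pass.
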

\begin{proof}
We define a homomorphism $\phi:{\mathcal{H}}_{\infty}(X,x_0)\rightarrow{\prod}_{n\in\Bbb{N}}{\mathcal{H}}_n(X,x_0)$ by
$\phi([\alpha])=([\alpha{\mid}_{{\Bbb{H}}^1}],[\alpha{\mid}_{{\Bbb{H}}^2}],...)$. Similar to the proof of \cite[Theorem 1]{karh}
we can prove that $Im(\phi)={\prod}_{n\in\Bbb{N}}^w{\mathcal{H}}_n(X,x_0)$. Also, we can see that $\phi$
is injective and hence it is an isomorphism onto ${\prod}_{n\in\Bbb{N}}^w{\mathcal{H}}_n(X,x_0)$. Using \cite[Theorem 1]{karh},
we obtain an isomorphism  from ${\mathcal{H}}_{\infty}(X,x_0)$ onto
${\prod}_{n\in\Bbb{N}}^w{\prod}_{k\in\Bbb{N}}^w{\pi}_n(X,x_0)$.
\end{proof}
	
%--------------------------------------------------------------------------------------------------------------------%

\begin{definition}
 We call a pointed space $(X,x_0)$ to be locally infinite-connected if for each open set $U$ contains $x_0$, there exists
 a neighborhood $V$ of $x_0$ such that the inclusion map $V\hookrightarrow U$, induces trivial homomorphism on $n$th
 homotopy group, for all $n\in\Bbb{N}$ .
\end{definition}

%--------------------------------------------------------------------------------------------------------------------

\begin{theorem}\label{infconn}
Let $(X,x_0)$ be a first countable locally infinite-connected pointed space, then
$${\mathcal{H}}_{\infty}(X,x_0)\cong{\prod}_{n\in\Bbb{N}}^w{\prod}_{k\in\Bbb{N}}^w{\pi}_n(X,x_0).$$
\end{theorem}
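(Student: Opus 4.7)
The plan is to parallel the proof of Theorem~\ref{lsc}, weakening the hypothesis from local strong contractibility to local infinite-connectedness. First I would define the restriction homomorphism $\phi:{\mathcal{H}}_{\infty}(X,x_0)\rightarrow\prod_{n\in\Bbb{N}}{\mathcal{H}}_n(X,x_0)$ by $\phi([\alpha])=([\alpha{\mid}_{\Bbb{H}^n}])_{n\in\Bbb{N}}$. Using first countability and local infinite-connectedness, I would extract a nested neighborhood basis $\{U_m\}_{m\in\Bbb{N}}$ at $x_0$ such that the inclusion $U_{m+1}\hookrightarrow U_m$ induces the zero homomorphism on $\pi_n(-,x_0)$ for every $n\in\Bbb{N}$. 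Unlike in Theorem~\ref{lsc}, the inclusions $U_{m+1}\hookrightarrow U_m$ need not themselves be null-homotopic; only the induced maps on homotopy groups are trivial, so null-homotopies must be built sphere-by-sphere rather than inherited from a single contraction.

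The key step is to show $Im(\phi)=\prod^w_{n\in\Bbb{N}}{\mathcal{H}}_n(X,x_0)$. The reverse inclusion is immediate by extending a finitely supported family via constant maps on the high-dimensional pieces, so the work is in the forward direction. Given continuous $\alpha:\Bbb{H}^{\infty}\rightarrow X$, the topology of $\Bbb{H}^{\infty}$ yields $N\in\Bbb{N}$ such that $\alpha(\Bbb{H}^n)\subseteq U_2$ for all $n\geq N$. Fix such an $n$; continuity of $\alpha{\mid}_{\Bbb{H}^n}$ yields an increasing sequence $K_1^n\leq K_2^n\leq\cdots$ with $\alpha(S_k^n)\subseteq U_{m+1}$ whenever $k\geq K_m^n$. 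For $K_m^n\leq k<K_{m+1}^n$, the class $[\alpha{\mid}_{S_k^n}]\in\pi_n(U_{m+1},x_0)$ is killed by the inclusion into $U_m$, giving a null-homotopy $H_k^n$ with image in $U_m$. The family $\{H_k^n\}_{k\in\Bbb{N}}$ is null-convergent at $x_0$, so Lemma~\ref{joinmap}$(ii)$ assembles them into a continuous null-homotopy of $\alpha{\mid}_{\Bbb{H}^n}$ in $X$ relative to $\theta$. Hence $[\alpha{\mid}_{\Bbb{H}^n}]=e$ in ${\mathcal{H}}_n(X,x_0)$ for all $n\geq N$, placing $\phi([\alpha])$ in the weak product.

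For injectivity of $\phi$, if $\phi([\alpha])=(e,e,\ldots)$, the subtle part is to produce a single null-homotopy $H:\Bbb{H}^{\infty}\times I\rightarrow X$ of $\alpha$ relative to $\theta$; this requires the restrictions $H{\mid}_{\Bbb{H}^n\times I}$ to form a null-convergent family in the index $n$. Using the construction above, for $n$ sufficiently large I can choose the null-homotopy $H_n$ of $\alpha{\mid}_{\Bbb{H}^n}$ to have image contained in $U_{n-1}$, forcing null-convergence across $n$, and an $\Bbb{H}^{\infty}$-analogue of Lemma~\ref{joinmap}$(ii)$ assembles them into $H$. Finally, since local infinite-connectedness plainly implies $n$-locally simply connectedness for every $n$, \cite[Theorem~1]{karh} gives ${\mathcal{H}}_n(X,x_0)\cong\prod^w_{k\in\Bbb{N}}\pi_n(X,x_0)$, and composition with $\phi$ yields the desired isomorphism. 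The main obstacle is the size control on null-homotopies: without a null-homotopy of the inclusion itself, one works only at the level of homotopy classes, so the nested bookkeeping among the indices $n$, $k$, and $m$ must be arranged carefully so that null-convergence holds both within each $\Bbb{H}^n$ (for the image computation) and across the tower of $\Bbb{H}^n$'s (for the injectivity step).
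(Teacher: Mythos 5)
Your proposal is correct and takes essentially the same route as the paper: the paper introduces the same restriction homomorphism $\phi:{\mathcal{H}}_{\infty}(X,x_0)\rightarrow{\prod}_{n\in\Bbb{N}}{\mathcal{H}}_n(X,x_0)$, asserts (by the argument of \cite[Theorem 1]{karh}) that it is an isomorphism onto ${\prod}_{n\in\Bbb{N}}^w{\mathcal{H}}_n(X,x_0)$, and then composes with the isomorphisms ${\mathcal{H}}_n(X,x_0)\cong{\prod}_{k\in\Bbb{N}}^w{\pi}_n(X,x_0)$. Your sphere-by-sphere construction of null-convergent null-homotopies from the nested basis, and the size control needed for the injectivity step, are precisely the details the paper leaves implicit.
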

\begin{proof}
Consider the homomorphism $\phi:{\mathcal{H}}_{\infty}(X,x_0)\rightarrow{\prod}_{n\in\Bbb{N}}{\mathcal{H}}_n(X,x_0)$
defined in the proof of Theorem \ref{lsc}. Similar to the proof of \cite[Theorem 1]{karh} we can show that
$\phi:{\mathcal{H}}_{\infty}(X,x_0)\rightarrow{\prod}_{n\in\Bbb{N}}^w{\mathcal{H}}_n(X,x_0)$ is an isomorphism.
Using \cite[Theorem 1]{karh}, we obtain an isomorphism from ${\mathcal{H}}_{\infty}(X,x_0)$ onto
${\prod}_{n\in\Bbb{N}}^w{\prod}_{k\in\Bbb{N}}^w{\pi}_n(X,x_0)$.
\end{proof}

%--------------------------------------------------------------------------------------------------------------------%

\begin{theorem}\label{pr00}
For  every family of pointed spaces $\{(X_i,x_i)\}_{i\in I}$ the following isomorphism holds
$${\mathcal{H}}_\infty({\prod}_{i\in I}X_i,x_\ast)\cong {\prod}_{i\in I} {\mathcal{H}}_\infty(X_i,x_i),$$
where $x_\ast =\{x_i\}_{i\in I}\in {\prod}_{i\in I}X_i$ .
\end{theorem}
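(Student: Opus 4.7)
The plan is to mimic the proof of Theorem \ref{pr} almost verbatim, since that argument relied only on the universal property of the Cartesian product and not on any specific feature of the domain $\mathbb{H}^n$. Concretely, I would define
$$\psi: \mathcal{H}_\infty({\prod}_{i \in I} X_i, x_*) \lo {\prod}_{i \in I} \mathcal{H}_\infty(X_i, x_i)$$
by $\psi([f]) = \{[p_i \circ f]\}_{i \in I}$, where $p_i: \prod_{i \in I} X_i \to X_i$ are the projections and each coordinate is $\mathcal{H}_\infty(p_i)([f])$. The inverse would be built from the universal property of the product: given a family $\{f_i:(\mathbb{H}^\infty,\theta)\to(X_i,x_i)\}_{i\in I}$ of pointed continuous maps, there is a unique pointed continuous map $f:(\mathbb{H}^\infty,\theta)\to(\prod_{i\in I}X_i,x_*)$ with $p_i\circ f=f_i$ for all $i$, and one sets $\psi^{-1}(\{[f_i]\})=[f]$.

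The key steps, in order, are as follows. First, verify that $\psi$ is well-defined on homotopy classes and is a homomorphism; this reduces to the observation that pointed homotopies $\mathbb{H}^\infty\times I\to\prod_{i\in I}X_i$ relative to $\theta$ correspond bijectively to families of pointed homotopies in the factors (again by the universal property), together with the fact that the group operation on $\mathcal{H}_\infty$, defined componentwise on each summand $\mathbb{H}^n$ of $\mathbb{H}^\infty$, commutes with projections, so $p_i\circ(f\ast g)=(p_i\circ f)\ast(p_i\circ g)$. Second, check that $\psi^{-1}$ is well-defined on homotopy classes by the same homotopy correspondence applied in the other direction. Third, note that $\psi$ and $\psi^{-1}$ are mutually inverse, which is immediate from the defining identity $p_i\circ f=f_i$ combined with uniqueness in the universal property.

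I do not anticipate a serious obstacle: nothing in the argument interacts with the specific topology of $\mathbb{H}^\infty$, since everything is controlled by the universal property of the product, which is insensitive to the domain. The one place warranting a line of care is the bijective correspondence between pointed continuous maps (and pointed homotopies) $\mathbb{H}^\infty\times I\to\prod_{i\in I}X_i$ and families of such maps into each $X_i$; this is a direct instance of the universal property applied to the domain $\mathbb{H}^\infty\times I$. Consequently, the write-up should be essentially identical to that of Theorem \ref{pr}, with $\mathbb{H}^n$ replaced by $\mathbb{H}^\infty$ and $\mathcal{H}_n$ by $\mathcal{H}_\infty$ throughout.
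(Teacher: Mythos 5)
Your proposal matches the paper exactly: the paper's proof of this theorem is literally "See the proof of Theorem \ref{pr}," and that proof defines $\psi([f])=\{\mathcal{H}_n(p_i)([f])\}_{i\in I}$ with inverse given by the universal property of the product, precisely as you do. Your additional remarks on the homotopy correspondence and compatibility of the operation with projections only make explicit what the paper leaves as "easy to see."
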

\begin{proof}
See the proof of Theorem \ref{pr}.
\end{proof}

%--------------------------------------------------------------------------------------------------------------------%

\begin{lemma}\label{weakdirect}
If $(X,x_0)$ is a pointed space, then ${\prod}_{i\in \Bbb{N}}^w{\mathcal{H}}_n(X,x_0)$ can be embedded in
${\mathcal{H}}_{\infty}(X,x_0)$.
\end{lemma}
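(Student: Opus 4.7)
The plan is to mimic the construction in Lemma \ref{weakdir}, replacing the role of the spheres $S^n_k\subset \Bbb{H}^n$ with that of the Hawaiian earrings $\Bbb{H}^n\subset \Bbb{H}^{\infty}$. I would define
$$\gamma:{\prod}_{n\in \Bbb{N}}^w{\mathcal{H}}_n(X,x_0)\lo {\mathcal{H}}_{\infty}(X,x_0)$$
by $\gamma([f_1],[f_2],\ldots,[f_m],e,e,\ldots)=[f]$, where $f:(\Bbb{H}^{\infty},\theta)\to (X,x_0)$ restricts to $f_n$ on the $n$th Hawaiian component for $n\le m$ and to the constant map $c_{x_0}$ for $n>m$.

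For continuity of $f$, since $\Bbb{H}^{\infty}$ carries the weak join topology of the family $\{\Bbb{H}^n\}_{n\in\Bbb{N}}$, the sequence $\{f|_{\Bbb{H}^n}\}_{n\in\Bbb{N}}$ is eventually constant at $x_0$ and therefore trivially null-convergent; an exact analog of Lemma \ref{joinmap}(i), whose proof copies over verbatim with $\Bbb{H}^n$ in place of $S^n_k$, implies that $f$ is continuous. Well-definedness at the level of homotopy classes follows similarly: if $f_n\simeq f'_n\ rel\ \{\theta\}$ for each $n\le m$, then pasting this finite family of homotopies with the constant homotopy on $\widetilde{\bigvee}_{n>m}\Bbb{H}^n$ yields a homotopy $f\simeq f'\ rel\ \{\theta\}$ via the infinite-dimensional analog of Lemma \ref{joinmap}(iii). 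The homomorphism property of $\gamma$ comes directly from the definition of the group operation on ${\mathcal{H}}_{\infty}$, which is inherited from the componentwise operations on the $\mathcal{H}_n$'s.

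For injectivity, suppose $\gamma([f_1],\ldots,[f_m],e,e,\ldots)=e$ in ${\mathcal{H}}_{\infty}(X,x_0)$. Then there is a homotopy $H:\Bbb{H}^{\infty}\times I\to X$ from the associated map $f$ to $c_{x_0}$ relative to $\{\theta\}$. Restricting $H$ to $\Bbb{H}^n\times I$ for each $n\le m$ produces a null-homotopy of $f_n$ in $X$ relative to $\{\theta\}$, so $[f_n]=e$ in ${\mathcal{H}}_n(X,x_0)$ and the original tuple is trivial. Hence $\gamma$ is a monomorphism, which gives the desired embedding.

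The only mild obstacle is justifying that Lemma \ref{joinmap} transfers to the $\Bbb{H}^{\infty}$ setting, but because every element in the image of $\gamma$ has only finitely many non-constant components, the required continuity and homotopy arguments collapse to elementary gluings and require no new idea beyond a direct repetition of the proofs in Lemma \ref{joinmap}.
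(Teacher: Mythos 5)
Your proposal is correct and follows essentially the same route as the paper: the paper's proof also defines the map by placing $f_n$ on $\Bbb{H}^n$ for $n\leq m$ and the constant map elsewhere, invokes the weak join topology of $\Bbb{H}^{\infty}$ for continuity, and obtains injectivity by restricting a null-homotopy to each $\Bbb{H}^n$, all "similar to the proof of Lemma \ref{weakdir}". You simply spell out the details that the paper leaves implicit.
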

\begin{proof}
Similar to the proof of Lemma \ref{weakdir} we define
$\rho:{\prod}_{n\in \Bbb{N}}^w{\mathcal{H}}_n(X,x_0)\rightarrow {\mathcal{H}}_{\infty}(X,x_0)$ by the rule
$$\rho([f_1],[f_2],...,[f_m],e,e,...)=[f],$$ where $f{\mid}_{{\Bbb{H}}^n}=f_n$, for $n\leq m$, and
 $f{\mid}_{{\Bbb{H}}^n}=c_{x_0}$, for $n>m$.
By the topology of $\Bbb{H}^{\infty}$  $f$ is continuous and $\rho$ is well-defined.
We can easily see that $\rho$ is a monomorphism.
\end{proof}

%---------------------------------------------------------------------------------------------------------------

The following corollary is a consequence of Lemmas \ref{weakdirect} and \ref{weakdir}.
\begin{corollary}
If $(X,x_0)$ is a pointed space, then ${\prod}_{n\in \Bbb{N}}^w{\prod}_{k\in \Bbb{N}}^w{\pi}_n(X,x_0)$ can be
embedded in ${\mathcal{H}}_{\infty}(X,x_0)$.
\end{corollary}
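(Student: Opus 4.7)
The plan is to obtain the desired embedding as a composition of the two embeddings provided by the preceding lemmas. By Lemma \ref{weakdir}, for each fixed $n\in\Bbb{N}$ we have a monomorphism
$$\beta_n:{\prod}_{k\in\Bbb{N}}^w{\pi}_n(X,x_0)\longrightarrow {\mathcal{H}}_n(X,x_0),$$
and by Lemma \ref{weakdirect} we have a monomorphism
$$\rho:{\prod}_{n\in\Bbb{N}}^w{\mathcal{H}}_n(X,x_0)\longrightarrow {\mathcal{H}}_{\infty}(X,x_0).$$
The idea is to apply the $\beta_n$'s coordinatewise on the outer weak product and then compose with $\rho$.

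First, I would define
$$B:{\prod}_{n\in\Bbb{N}}^w{\prod}_{k\in\Bbb{N}}^w{\pi}_n(X,x_0)\longrightarrow {\prod}_{n\in\Bbb{N}}^w{\mathcal{H}}_n(X,x_0)$$
by $B(\{a_n\}_{n\in\Bbb{N}})=\{\beta_n(a_n)\}_{n\in\Bbb{N}}$. Since each $a_n$ lies in ${\prod}_{k\in\Bbb{N}}^w{\pi}_n(X,x_0)$, and only finitely many $a_n$ are nontrivial (by the outer weak product condition), the image $\beta_n(a_n)$ is the identity of ${\mathcal{H}}_n(X,x_0)$ whenever $a_n$ is trivial, so $B$ lands in the weak product on the right. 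It is a homomorphism since each $\beta_n$ is, and it is injective because each $\beta_n$ is injective on its coordinate.

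Then I would define the desired map as $\Phi:=\rho\circ B$. Being a composition of monomorphisms, $\Phi$ is itself a monomorphism, which yields the embedding of ${\prod}_{n\in\Bbb{N}}^w{\prod}_{k\in\Bbb{N}}^w{\pi}_n(X,x_0)$ into ${\mathcal{H}}_{\infty}(X,x_0)$. There is no serious obstacle here; the only mild point to verify is that $B$ genuinely maps into the weak product on the target (handled above by noting that $\beta_n$ sends the identity to the identity), after which the conclusion follows immediately from the two cited lemmas.
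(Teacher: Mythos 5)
Your proposal is correct and follows exactly the route the paper intends: the corollary is stated as an immediate consequence of Lemmas \ref{weakdirect} and \ref{weakdir}, obtained by applying the monomorphisms $\beta_n$ coordinatewise and composing with $\rho$, just as you do. No gaps.
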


%----------------------------------------------------------------------------------------------------------------

\begin{theorem}\label{3.7}
Let $(X,x_0)$ be a pointed space and $C(X)$ be the cone over $X$. Suppose that  $\hat{x}_t= (x_0,t)$ is a point of $C(X)$
except the vertex ($t\neq 1$). Then $${\mathcal{H}}_{\infty}(C(X),\hat{x}_t)\cong\frac{{\mathcal{H}}_{\infty}(X,x_0)}
{{\prod}_{n\in \Bbb{N}}^w{\prod}_{k\in \Bbb{N}}^w{\pi}_n(X,x_0)}.$$
\end{theorem}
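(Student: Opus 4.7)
The plan is to adapt the argument of Theorem \ref{cone} to the infinite-dimensional setting, invoking the finite-dimensional statement dimension-by-dimension. The central map is $\mu=\mathcal{H}_\infty(i_t):\mathcal{H}_\infty(X,x_0)\to\mathcal{H}_\infty(C(X),\hat{x}_t)$ induced by $i_t:X\to C(X)$, $i_t(x)=(x,t)$. I will show that $\mu$ is an epimorphism whose kernel coincides with the image of the embedding ${\prod}_{n\in\Bbb{N}}^w{\prod}_{k\in\Bbb{N}}^w\pi_n(X,x_0)\hookrightarrow\mathcal{H}_\infty(X,x_0)$ from Lemma \ref{weakdirect}.

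For surjectivity, given $[g]\in\mathcal{H}_\infty(C(X),\hat{x}_t)$, the openness of $X\times[0,1)$ in $C(X)$ together with the weak-join topology of $\Bbb{H}^\infty$ at $\theta$ yields an $N$ with $g(\Bbb{H}^n)\subseteq X\times[0,1)$ for $n\geq N$. For $n\geq N$ set $f_n=p_1\circ g|_{\Bbb{H}^n}$, where $p_1$ is the first projection; for $n<N$, use the surjectivity of $\mathcal{H}_n(i_t)$ from Theorem \ref{cone} to pick $f_n:\Bbb{H}^n\to X$ with $i_t\circ f_n\simeq g|_{\Bbb{H}^n}$ via a homotopy $K_n$ rel $\theta$. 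Assembling $f|_{\Bbb{H}^n}=f_n$, the continuity of $f$ reduces to null-convergence of $\{f_n\}$ in $X$, which follows from the continuity of $g$ at $\theta$ applied to basic neighborhoods $U\times[0,1)$ with $U$ open around $x_0$, together with the $\Bbb{H}^\infty$-analog of Lemma \ref{joinmap}(i). A homotopy witnessing $\mu([f])=[g]$ is built by taking $K_n$ on $\Bbb{H}^n\times I$ for $n<N$ and the straight-line contraction in the $I$-coordinate of $C(X)$ for $n\geq N$, glued via the $\Bbb{H}^\infty$-analog of Lemma \ref{joinmap}(ii).

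For the kernel, the containment ${\prod}_{n}^w{\prod}_{k}^w\pi_n(X,x_0)\subseteq\ker\mu$ is direct: a representative of such an element has finitely many nontrivial $f|_{S_k^n}$ across finitely many $n$, each $i_t\circ f|_{S_k^n}$ is null-homotopic because $\pi_n(C(X),\hat{x}_t)=0$, and the null-homotopies assemble by Lemma \ref{joinmap}(ii)(iii) at both levels. Conversely, suppose $\mu([f])=e$ with null-homotopy $H:\Bbb{H}^\infty\times I\to C(X)$ of $i_t\circ f$. By compactness of $I$ and continuity of $H$ along $\{\theta\}\times I$, as in the proof of Theorem \ref{cone}, for any open $U\ni x_0$ in $X$ there is an open $V_U\ni\theta$ in $\Bbb{H}^\infty$ with $H(V_U\times I)\subseteq U\times[0,1)$; the weak-join topology of $\Bbb{H}^\infty$ then gives $N_U$ with $H(\Bbb{H}^n\times I)\subseteq U\times[0,1)$ for $n\geq N_U$. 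Specializing $U=X$ gives $N$ such that $G_n:=p_1\circ H|_{\Bbb{H}^n\times I}$ null-homotopes $f|_{\Bbb{H}^n}$ in $X$ for $n\geq N$; running the argument for arbitrary $U$ shows $\{G_n\}_{n\geq N}$ is null-convergent. For the finitely many $n<N$, the restriction $H|_{\Bbb{H}^n\times I}$ shows $\mathcal{H}_n(i_t)([f|_{\Bbb{H}^n}])=e$, so by Theorem \ref{cone} there exist $f'_n:\Bbb{H}^n\to X$, with $f'_n|_{S_k^n}$ constant for all large $k$, and homotopies $L_n:f|_{\Bbb{H}^n}\simeq f'_n$ rel $\theta$. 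Setting $f'|_{\Bbb{H}^n}=f'_n$ for $n<N$ and $f'|_{\Bbb{H}^n}=c_{x_0}$ for $n\geq N$ places $[f']$ in the image of the embedding of Lemma \ref{weakdirect}, and the $L_n,G_n$ combine via the $\Bbb{H}^\infty$-analog of Lemma \ref{joinmap}(ii) into a homotopy $f\simeq f'$.

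The main obstacle is the null-convergence bookkeeping needed at two levels, across the spheres $S_k^n$ inside each $\Bbb{H}^n$ (handled by Lemma \ref{joinmap}) and across the copies $\Bbb{H}^n$ inside $\Bbb{H}^\infty$. In particular, one first needs the $\Bbb{H}^\infty$-version of Lemma \ref{joinmap}, whose proof is a direct transcription using the weak-join structure $\Bbb{H}^\infty={\widetilde{\bigvee}}_n\Bbb{H}^n$, in order to glue the dimension-wise homotopies supplied by Theorem \ref{cone} into continuous homotopies on all of $\Bbb{H}^\infty\times I$.
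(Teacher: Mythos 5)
Your proposal is correct and follows essentially the same route as the paper, which itself only says the result is proved ``similar to the proof of Theorem \ref{cone}''; you have simply carried out that adaptation in detail, handling the two-level null-convergence (over $k$ within each $\Bbb{H}^n$ and over $n$ within $\Bbb{H}^\infty$) and invoking the finite-dimensional surjectivity and kernel computation of Theorem \ref{cone} for the finitely many low dimensions. The only mild divergence is that for $n<N$ you quote Theorem \ref{cone} as a black box rather than re-running its construction, which if anything streamlines the argument.
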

\begin{proof}
Let $\mu={\mathcal{H}}_{\infty}(i_t):{\mathcal{H}}_{\infty}(X,x_0)\rightarrow {\mathcal{H}}_{\infty}(C(X),x_t)$
be defined by  $\mu([f])=[i_t\circ f]$, where $i_t$ is defined in the proof of Theorem \ref{cone}.
Similar to the proof of Theorem \ref{cone}, we can prove that $\mu$ is an epimorphism with
kernel ${\prod}_{n\in \Bbb{N}}^w{\prod}_{k\in \Bbb{N}}^w{\pi}_n(X,x_0)$.
\end{proof}

%--------------------------------------------------------------------------------------------------------------------

\begin{example}
Let $S={\widetilde{\bigvee}}_{n\in{\Bbb{N}}} S^n$ be the weak join of the family $\{S^n\mid n\in \Bbb{N}\}$ with the common point $\eta$.
Then ${\mathcal{H}}_{\infty}(CS,\hat{\eta})$ is not trivial, where $\hat{\eta}=(\eta, 0)$.
\end{example}
\begin{proof}
Consider the map
$\nu:{\Bbb{H}}^{\infty}\rightarrow S$ which is defined as follows. If $r\in S_n^n$, then $\nu(r)=r$ and $\nu(r)=\hat{\eta}$ otherwise.
If the equivalence class of this map belongs to ${\prod}_{n\in \Bbb{N}}^w{\prod}_{k\in \Bbb{N}}^w{\pi}_n(S,\eta)$, then
the identity map $id_{S^n}$ is null-homotopic for some $n\in\Bbb{N}$ which is a contradiction. Thus by Theorem \ref{3.7} ${\mathcal{H}}_{\infty}(CS,\hat{\eta})$ is not
trivial.
\end{proof}

%----------------------------------------------------------------------------------------------------------------------

\begin{theorem}\label{main2}
Let ${\{(X_i,x_i)\}}_{i\in{\Bbb{N}}}$ be a family of locally strongly contractible, first countable pointed spaces.
If $X={\widetilde{\bigvee}}_{i\in{\Bbb{N}}}(X_i,x_i)$, then ${\mathcal{H}}_{\infty}(X,x_\ast) $ can be embedded in
${\prod}_{n\in\Bbb{N}}{\prod}_{k\in\Bbb{N}}{\pi}_n(X,x_{\ast})$.
\end{theorem}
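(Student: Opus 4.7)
The plan is to factor the desired embedding through the restriction map
$\phi:{\mathcal{H}}_{\infty}(X,x_\ast)\to{\prod}_{n\in\Bbb{N}}{\mathcal{H}}_n(X,x_\ast)$
defined by $\phi([\alpha])=([\alpha|_{\Bbb{H}^n}])_{n\in\Bbb{N}}$, in the spirit of the proofs of Theorems \ref{lsc} and \ref{infconn}, and then postcompose with the product of the embeddings supplied by Theorem \ref{main}. Indeed, the hypotheses on the $X_i$'s ensure that Theorem \ref{main} applies, so ${\mathcal{H}}_n(X,x_\ast)\cong L_n(X,x_\ast)\subseteq{\prod}_{k\in\Bbb{N}}{\pi}_n(X,x_\ast)$ for every $n$, which yields a monomorphism $\Psi:{\prod}_{n\in\Bbb{N}}{\mathcal{H}}_n(X,x_\ast)\hookrightarrow{\prod}_{n\in\Bbb{N}}{\prod}_{k\in\Bbb{N}}{\pi}_n(X,x_\ast)$. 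The composition $\Psi\circ\phi$ is the claimed embedding, and since $\phi$ is visibly a well-defined homomorphism, the proof reduces to showing that $\phi$ is injective.

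Suppose $\phi([\alpha])$ is trivial, so that each $\alpha|_{\Bbb{H}^n}$ is null-homotopic rel $\{\theta\}$ via some $F_n:\Bbb{H}^n\times I\to X$. I reuse the nested local basis $\{U_m\}_{m\in\Bbb{N}}$ at $x_\ast$ constructed in the proof of Theorem \ref{main}, namely $U_m=({\bigvee}_{i<m}V_m^i)\vee({\widetilde{\bigvee}}_{i\geq m}X_i)$, together with the retractions $R_m:X\to{\widetilde{\bigvee}}_{i\geq m}X_i\subseteq U_m$ and the homotopies $H_m:J_m\simeq R_m\ rel\ \{x_\ast\}$ taking values in $U_{m-1}$. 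Continuity of $\alpha$ and the weak-join topology of $\Bbb{H}^{\infty}$ supply an increasing sequence $\{N_m\}$ with $\alpha(\Bbb{H}^n)\subseteq U_m$ whenever $n\geq N_m$.

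Next I introduce an auxiliary map $\tilde{\alpha}:\Bbb{H}^{\infty}\to X$ agreeing with $\alpha$ on $\Bbb{H}^n$ for $n<N_1$ and with $R_m\circ\alpha|_{\Bbb{H}^n}$ on $\Bbb{H}^n$ for $N_m\leq n<N_{m+1}$. The homotopies $H_m(\alpha|_{\Bbb{H}^n}(-),-)$ connecting $\alpha|_{\Bbb{H}^n}$ to $R_m\circ\alpha|_{\Bbb{H}^n}$ have image in $U_{m-1}$, so they are null-convergent as $n\to\infty$ and, by the $\Bbb{H}^\infty$-analogue of Lemma \ref{joinmap}, assemble into a continuous homotopy $\tilde{\alpha}\simeq\alpha\ rel\ \{\theta\}$. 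Similarly, using $F_n$ for $n<N_1$ and $R_m\circ F_n$ for $N_m\leq n<N_{m+1}$ produces a continuous null-homotopy of $\tilde{\alpha}$, because $R_m\circ F_n$ takes values in ${\widetilde{\bigvee}}_{i\geq m}X_i\subseteq U_m$. Composing the two gives $\alpha\simeq c_{x_\ast}\ rel\ \{\theta\}$, so $[\alpha]$ is trivial in ${\mathcal{H}}_{\infty}(X,x_\ast)$, which is exactly the injectivity we need.

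The main obstacle I anticipate is justifying the two gluing arguments for continuity; both rest on the analogue of parts $(i)$ and $(ii)$ of Lemma \ref{joinmap} for maps out of $\Bbb{H}^{\infty}$, namely the statement that such a map is continuous precisely when its restrictions to the pieces $\Bbb{H}^n$ are continuous and their images form a null-convergent family. With this tool in hand the argument is essentially a two-level version of the proof of Theorem \ref{main}, with the outer index $n$ playing the role that the sphere index $k$ played there, and no new homotopical ingredient is required.
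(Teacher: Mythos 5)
Your proposal is correct and follows essentially the same route as the paper: both reduce the claim to injectivity of the restriction homomorphism $\phi:{\mathcal{H}}_{\infty}(X,x_\ast)\to{\prod}_{n\in\Bbb{N}}{\mathcal{H}}_n(X,x_\ast)$, prove it by replacing $\alpha|_{\Bbb{H}^n}$ with $R_m\circ\alpha|_{\Bbb{H}^n}$ for $n$ large using the nested basis $\{U_m\}$ from Theorem \ref{main} and then gluing the retracted null-homotopies $R_m\circ F_n$, and finally postcompose with the embeddings ${\mathcal{H}}_n(X,x_\ast)\cong L_n(X,x_\ast)\subseteq\prod_{k}\pi_n(X,x_\ast)$ supplied by Theorem \ref{main}. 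The only difference is cosmetic: you spell out the two continuity gluings via the $\Bbb{H}^\infty$-analogue of Lemma \ref{joinmap}, which the paper leaves implicit.
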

\begin{proof}
Let  $\phi:{\mathcal{H}}_{\infty}(X,x_\ast)\rightarrow {\prod}_{n\in\Bbb{N}}{\mathcal{H}}_n(X,x_\ast)$ be the
homomorphism defined in the proof of Theorem \ref{main}. To show the injectivity of the $\phi$, let $[f]$ be an element
of \ ${\mathcal{H}}_{\infty}(X,x_\ast)$ so that $\phi([f])=(e,e,...)$.
Therefore $f{\mid}_{{\Bbb{H}}^n}\simeq c \ \ rel\{\theta\}$ with a homotopy mapping
$F_n:{{\Bbb{H}}^n}\times I\rightarrow X$. Let $\{U_m\}$ be the countable basis at $x_{\ast}$ as in the proof of Theorem
\ref{main}. For any $m\in \Bbb{N}$, there is $K_m$ such that if $n\geq K_m$, then
$Im(f{\mid}_{{\Bbb{H}}^n})\subseteq U_{m}$.
The proof of Theorem \ref{main} implies that $f{\mid}_{{\Bbb{H}}^n}$ is  homotopic  to a map
$f_n$ relative to $\{\theta\}$ in $U_m$, where $f_n:=R_m\circ f{\mid}_{{\Bbb{H}}^n}$  and so
$Im(f_n)\subseteq \widetilde{{\bigvee}}_{i\geq m}X_i$. Now $R_m\circ F_n$ makes $f_n$ to be  null-homotopic
$(rel \{\theta\})$.
Joining these homotopies construct a pointed homotopy  between $\tilde{f}$ ( i.e the join of the $f_n$'s) and the constant map.
Since $f$ is  homotopic to $\tilde{f}$ $(rel \{\theta\})$, $f$ is  null-homotopic $(rel \{\theta\})$,
as required. Applying Theorem
\ref{main}, ${\mathcal{H}}_{\infty}(X,x_\ast)$ is isomorphic to a subgroup of
${\prod}_{n\in\Bbb{N}}{\prod}_{k\in\Bbb{N}}{\pi}_n(X,x_{\ast})$.
\end{proof}

%===================================================================================================================%
\subsection*{Acknowledgements}
The authors would like to thank the referee for the valuable comments that help improve the manuscript.\\
This research was supported by a grant from Ferdowsi University of Mashhad (No. MP90237HMP).
%=========================================================================================================================================
%=========================================================================================================================================

%% The Appendices part is started with the command \appendix;
%% appendix sections are then done as normal sections
%% \appendix

%% \section{}
%% \label{}

%% References
%%
%% Following citation commands can be used in the body text:
%% Usage of \cite is as follows:
%%   \cite{key}         ==>>  [#]
%%   \cite[chap. 2]{key} ==>> [#, chap. 2]
%%

%% References with bibTeX database:

%\bibliographystyle{elsarticle-num}
%\bibliography{<your-bib-database>}

%% Authors are advised to submit their bibtex database files. They are
%% requested to list a bibtex style file in the manuscript if they do
%% not want to use elsarticle-num.bst.

%% References without bibTeX database:

% \begin{thebibliography}{00}

%% \bibitem must have the following form:
%%   \bibitem{key}...
%%

% \bibitem{}

% \end{thebibliography}

\end{document}